\documentclass[12pt,a4paper]{extarticle}
\usepackage{float}
\usepackage{amsmath, amsthm, mathtools, amssymb}
\usepackage{graphicx}
\numberwithin{equation}{section} % generates equation numbers

\DeclarePairedDelimiter{\ceil}{\lceil}{\rceil}

\usepackage{cleveref}

\newtheorem{thm}{Theorem}[section] % renumbers in each section

\newtheorem{obs}[thm]{Observation}

\theoremstyle{definition}
\newtheorem{defn}[thm]{Definition}

\newtheorem{rmk}[thm]{Remark}

\crefname{theorem}{theorem}{theorems}
\Crefname{theorem}{Theorem}{Theorems}

\newcommand{\Wlg}{Without loss of generality}
\newcommand{\wlg}{without loss of generality}

\newcommand{\Everify}{It can be easily verified that}

\usepackage{authblk} %allows inserting author affiliation

\author  {Brian D'Souza \thanks  {brian.dsouza@gcsanquelim.ac.in} }

\author {Jessica Pereira \thanks{zaydegrace@unigoa.ac.in}}

\affil {\small School of Physical and Applied Sciences, Goa University,\\ Taleigao Plateau, Goa 403206, India. }

\title{\vspace{-4 cm} Some Results on Additively Graceful Signed Stars and Double Stars \thanks{This paper is a preprint and is available on arXiv.}}
\date{\vspace{-2 cm}}

\begin{document}

	\maketitle

	\begin{abstract}
		We study additively graceful labelings of signed graphs on stars and double stars. While the case of signed stars is straightforward, the problem becomes significantly more intricate for signed double stars. We obtain a characterization of additively graceful signed stars, while, for several sub-classes of additively graceful signed double stars, we establish existence, uniqueness and non-existence results.
	\end{abstract}
	{\it AMS Subject of classification}: \textbf{05C78, 05C22}

	\noindent{\it Keywords}: additively graceful signed graph, signed graph, graph labeling, signed stars, signed double stars.

\section{Introduction}

Throughout this paper, a graph $G$ refers to a finite, nonempty set of objects called \emph{vertices}, together with a collection of unordered pairs of distinct vertices called \emph{edges}. The sets of vertices and edges are denoted by $V(G)$ and $E(G)$, respectively, and their cardinalities by $p$ and $q$. In this case, $G$ is called a $(p,q)$ graph. Standard terminology from graph theory used in this paper can be found in \cite{chartrand2010graphs}.

An injective map from $V(G)$ to $\mathbb{Z}$ is called a {\it vertex labeling} of $G$ while an {\it edge labeling} is an injective map from $E(G)$ to $\mathbb{Z}$. Although vertex and edge labelings may be defined independently, we will often be interested in deriving edge labels from a given vertex labeling via a specified rule. The resulting edge labeling is called an {\it induced edge labeling}, and the corresponding edge labels are called {\it induced edge labels}. A comprehensive survey of graph labelings can be found in Gallian \cite{DynamicSurveyGallian}.
	\begin{defn}
	\cite{rosa1967certain,
		golomb1972number}
	Let $G$ be a $(p,q)$ graph. A {\it graceful labeling} of $G$ is an injection $f:V(G)\rightarrow \{0,1,\dots, q\}$ such that when each edge $uv$ is assigned the label $f^*(uv) = |f(u)-f(v)|$, the resulting induced edge labels are all distinct. A graph which admits such a labeling is called a {\it graceful graph}.
	\end{defn}
	
	Rosa \cite{rosa1967certain} referred to such a labeling as a $\beta$-valuation, and Golomb \cite{golomb1972number} subsequently termed it a graceful labeling. The following is a well-known result from the literature.
	
	\begin{thm}
	\label{thm:caterpillar is graceful}
	Every caterpillar is graceful.
	\end{thm}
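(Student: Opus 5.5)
The plan is the classical Rosa construction, which in fact yields an $\alpha$-labeling: a graceful labeling with a threshold separating the two colour classes. Let $G$ be a caterpillar with $q$ edges; since $G$ is a tree, $p=q+1$. Deleting all leaves leaves a path $P$, called the spine. If $P$ is empty or a single vertex, $G$ is a star $K_{1,q}$, and we label the centre $0$ and the leaves $1,\dots,q$. Otherwise we first list the vertices of $G$ in a specific linear order $v_0,v_1,\dots,v_q$. Write the spine as $u_1,u_2,\dots,u_k$; we may assume each end $u_1,u_k$ has at least one leaf, adding the end leaves to the spine if needed. Properly 2-colour the tree. Then order all vertices so that each colour class, read along the caterpillar from left to right, appears in a monotone sequence. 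Concretely, the left-to-right order goes spine vertex $u_i$, then the leaves of $u_i$; these leaves have the opposite colour to $u_i$ and so sit in the same colour class as $u_{i+1}$.

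The labeling is the standard zigzag. Let $A$ be the colour class containing $u_1$ and $B$ the other class. Traverse the vertices of $A$ in left-to-right order and give them $0,1,2,\dots,|A|-1$. Traverse the vertices of $B$ in left-to-right order and give them $q,q-1,\dots,|A|$. Because $|A|+|B|=q+1$, the labels are exactly $\{0,\dots,q\}$, so the map is injective.

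The key step is to show that the induced differences are all distinct, and in fact cover $\{1,\dots,q\}$. Every edge joins some $a\in A$ to some $b\in B$, so its label is $f(b)-f(a)>0$. Order the edges left to right: first the edges at $u_1$ (to its leaves and to $u_2$), then the edges at $u_2$ not yet counted, and so on. Moving to the next edge in this order changes exactly one endpoint. Either the $A$-endpoint advances to the next vertex of $A$, raising $f(a)$ by $1$, or the $B$-endpoint advances to the next vertex of $B$, lowering $f(b)$ by $1$. In both cases the edge label drops by exactly $1$. The first edge is $u_1$ with its first leaf, which has label $q-0=q$. Hence the $q$ edge labels are $q,q-1,\dots,1$, all distinct.

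The main obstacle is justifying rigorously that the left-to-right edge ordering has this "exactly one endpoint advances, by exactly one step in its class order" property. I would prove it by induction on the spine index $i$. Assume the last edge processed at stage $i$ is $u_iu_{i+1}$. The next edges are $u_{i+1}$ joined to its leaves, taken in order. Each of these keeps the endpoint $u_{i+1}$ fixed and advances the other endpoint. That endpoint runs through the leaves of $u_{i+1}$, which lie in the class of $u_i$ and immediately follow $u_i$ in the ordering of that class. The final edge of the stage is $u_{i+1}u_{i+2}$, which advances to $u_{i+2}$, the next vertex in that same class. With this bookkeeping, the consecutive-difference property holds at every step, which gives the theorem.
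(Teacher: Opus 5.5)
Your argument is correct and is exactly the classical Rosa zigzag construction, which indeed produces an $\alpha$-labeling; the paper gives no proof of its own and only quotes this theorem as a well-known result from the literature, so there is nothing there to diverge from. One harmless slip: the remark about ``adding the end leaves to the spine if needed'' is unnecessary (and a bit at odds with wanting the ends to carry leaves). When $k\ge 2$, each end of the leaf-deleted path automatically has a leaf, and in any case the zigzag still starts at edge label $q$ even if $u_1$ has no leaves.
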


In this paper, the investigation focuses on two classes of caterpillars, namely stars and double stars. The complete bipartite graph \(K_{1,t}\) is called a \emph{star}, whereas, given two stars \(K_{1,r}\) and \(K_{1,s}\), each with a chosen central vertex, a \emph{double star} is defined as the graph obtained by joining these two central vertices by an edge. Vertices of degree 1 shall be referred to as \emph{pendant vertices} or \emph{pendants} while their lone incident edge shall be called a \emph{pendant edge}.
\\\\
Hegde \cite{hegde1989additively} introduced the concept of additively graceful  graphs, which is a variation of harmonious labelings.
\begin{defn}
	\cite{hegde1989additively}
	Let $G$ be a $(p,q)$ graph. An {\it additively graceful labeling} of $G$ is an injection $f:V(G)\rightarrow \{0,1,\dots, \ceil{\frac{q+1}{2}}\}$, such that when each edge $uv$ is assigned the label $f^*(uv) = f(u)+f(v)$, we have $f^*(E(G))=\{1,2,\dots,q\}$. A graph which admits such a labeling is called an {\it additively graceful graph}.
\end{defn}

Building on the work in \cite{hegde1989additively}, Pereira et al. \cite{pereira2023additively} generalized the concept of additively graceful graphs by defining additively graceful signed graphs. A {\it signed graph} $S$, is a $(p,q)$ graph in which certain $m$ edges are specified as positive and the remaining $n$ edges are specified as negative. We call $S$ a $(p,m,n)$ signed graph and denote the set of positive and negative edges by $E^+(S)$ and $E^-(S)$, respectively. $S$ shall be called {\it all positive} (or {\it all negative}) if all its edges are positive (or negative). In diagrams of signed graphs, we will use a {solid line} to represent a positive edge, while a {dashed line} shall represent a negative edge (see Figure \ref{fig:fig12-used-in-second-paper}).

\begin{defn} \cite{pereira2023additively}
	Let $S=(V,E)$ be a $(p,m,n)$ signed graph.
	Let $f:V \rightarrow \{0,1,\dots, m+\lceil\frac{(n+1)}{2}\rceil\}$ be an injective mapping and let the induced edge function be defined as $f^*(uv)=f(u)+f(v), ~ \forall ~ uv \in E^-(S)$ and $f^*(uv)=|f(u)-f(v)|,~\forall~ uv \in E^+(S)$. If $\{f^*(uv):uv \in E^-(S)\}=\{1, 2, ..., n\}$ and $\{f^*(uv):uv \in E^+(S)\}=\{1, 2, ..., m\}$, then $f$ is called an {\it additively graceful labeling} of $S$. A signed graph which admits such a labeling is called an {\it additively graceful signed graph}.
\end{defn}

The following complete characterizations for additively graceful signed paths and signed cycles have been obtained in \cite{2025PathsCycles}.

\begin{thm} \emph{\cite{2025PathsCycles}} \label{Characterization theorem for signed paths}
	A signed path $S$ with $n$ negative edges is additively graceful if and only if $n \leq 2$ and $S$ has at most one negative section, except that the four signed paths in \Cref{fig:fig12-used-in-second-paper} are not additively graceful.
\end{thm}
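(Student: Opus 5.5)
The plan is to prove the two directions separately. Necessity will come from a counting argument together with the structure forced by the smallest negative labels. Sufficiency will come from explicit labelings in a few families, plus a finite check that isolates the four exceptional paths of \Cref{fig:fig12-used-in-second-paper}.

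\emph{Necessity.} Let $S$ be a signed path with $m$ positive and $n$ negative edges, so $p=m+n+1$. An additively graceful labeling is injective into $\{0,1,\dots,m+\lceil\frac{n+1}{2}\rceil\}$, which has $m+\lceil\frac{n+1}{2}\rceil+1$ elements. Hence $n\le\lceil\frac{n+1}{2}\rceil$, which fails for $n\ge 3$; so $n\le 2$.

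For $n\le1$ there is at most one negative section, so there is nothing more to prove. For $n=2$:
\begin{itemize}
\item The negative edge with label $1$ must have end labels $\{0,1\}$.
\item The negative edge with label $2$ must have end labels $\{0,2\}$, since $\{1,1\}$ is excluded by injectivity.
\item Hence both negative edges share the vertex labelled $0$, so they are adjacent and form a single negative section.
\end{itemize}

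A useful byproduct is that for $n=1$ and $n=2$ the counting is tight. The labeling is therefore a bijection onto $\{0,\dots,m+1\}$ (respectively $\{0,\dots,m+2\}$), and its negative part is forced: the edge $0$--$1$, respectively the path $1$--$0$--$2$.

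\emph{Sufficiency.} The case $n=0$ is \Cref{thm:caterpillar is graceful}, since a path is a caterpillar and the label range $\{0,\dots,m+1\}$ contains $\{0,\dots,m\}$.

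For $n=1$, removing the negative edge splits the path into two positive subpaths of lengths $a$ and $b$ with $a+b=m$. One subpath ends at the vertex labelled $0$ and the other at the vertex labelled $1$. We must label them bijectively with $\{0,\dots,m+1\}$ so that all $m$ absolute differences are exactly $1,\dots,m$.

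For $n=2$ the picture is the same. The two positive subpaths hang off the vertices labelled $1$ and $2$, use the labels $\{3,\dots,m+2\}$ on their remaining vertices, and again must realise the differences $1,\dots,m$. A negative edge at the end of the path corresponds to one subpath being empty.

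In each case I would build the labeling from the standard zigzag (graceful) path labeling. Alternate the largest and smallest unused labels along the longer subpath starting from its prescribed endpoint, so that the differences decrease $m, m-1, \dots$. Then continue the leftover interval of labels into the other subpath, separating cases by the parities of $a$ and $b$ and by which subpath is attached to which fixed label. The verification in each family is that the two subpaths consume complementary intervals of labels and complementary sets of differences.

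\emph{Main obstacle.} The hard step is making this construction work uniformly for every split $(a,b)$. The fixed small labels ($0,1$ or $1,0,2$) sit at the bottom of the range, so the zigzag from one side tends to produce a difference already used, or to require a label already taken, when $a$ and $b$ are both small or have particular parities. I would first try the ``outside-in'' zigzag on the longer subpath and a shifted zigzag on the shorter one, and adjust by swapping which subpath starts at the larger fixed label.

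The configurations where no adjustment works should be exactly the small cases, with $m$ at most about $3$. For these I would do an exhaustive check of all bijections, which also shows that the four signed paths of \Cref{fig:fig12-used-in-second-paper} admit no valid labeling. Once $m$ exceeds this threshold, the general families should cover all remaining splits.
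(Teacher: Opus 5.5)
\textbf{Necessity is fine; sufficiency is not proved.} Your counting gives $n\le 2$, and for $n=2$ the labels $1=0+1$ and $2=0+2$ force both negative edges onto the vertex labelled $0$, so they form a single section. Ruling out the exceptional paths is a finite check.

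The gap is the sufficiency half. That is where almost all the content of the theorem lies: you must show existence for every admissible split $(a,b)$ when $n=1,2$, and only then does it follow that there are \emph{only} four exceptions. You give no explicit labelings, and the scheme you describe does not work as stated.

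For $n=1$, a zigzag with differences $m,m-1,\dots$ cannot start at the vertex labelled $0$, because jumping to the largest label gives difference $m+1>m$. So suppose you zigzag $1,m+1,2,m,\dots$ along the subpath of length $a$ at the vertex labelled $1$. This uses the differences $m,\dots,b+1$. It leaves for the other subpath the $b$ consecutive labels starting at $\lfloor a/2\rfloor+2$. The subpath at $0$ then has internal differences forced to be exactly $1,\dots,b-1$. Hence its first vertex must carry the label $b$, which requires $b\ge\lfloor a/2\rfloor+2$.

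This already fails for $(a,b)=(2,1)$, although $2,0,1,4,3$ labels $+\,-\,+\,+$ correctly. The same obstruction appears for $n=2$ when zigzagging from the vertex labelled $2$; compare the valid labeling $3,1,0,2,5,4$ of $+\,-\,-\,+\,+$, which is not a zigzag. Your ``longer subpath'' dichotomy also says nothing when $a=b$.

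\textbf{The balanced case and the exhaustive check.} The balanced case $a=b$ is exactly where the obstructions lie, and your guess that the exceptions have ``$m$ at most about $3$'' is wrong. The four exceptions are $+\,-\,+$ and $+\,-\,-\,+$ (with $m=2$), together with $+\,+\,-\,+\,+$ and $+\,+\,-\,-\,+\,+$ (with $m=4$).

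For example, write $+\,+\,-\,+\,+$ with vertex labels $x_2,x_1,0,1,y_1,y_2$ in order. The positive difference $4$ can only come from $x_1=4$ or $y_1=5$, and every completion of either choice repeats a difference.

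So an exhaustive search capped at $m\le 3$ would miss two exceptions. Conversely, to prove these four are the only ones you must produce labelings for every other split, including all balanced splits $a=b\ge 3$. Two examples are $7,2,6,0,1,4,3,5$ for $n=1$ and $3,5,6,1,0,2,8,4,7$ for $n=2$.

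What is missing is one of the following:
\begin{itemize}
\item a complete, verified family of labelings for $n=1,2$ indexed by $(a,b)$, treating separately $b=0$, splits with $b$ small relative to $a$, and $a=b$;
\item an inductive extension argument that covers the same cases.
\end{itemize}
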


\begin{figure}
	\centering
	\includegraphics[width=0.7\linewidth]{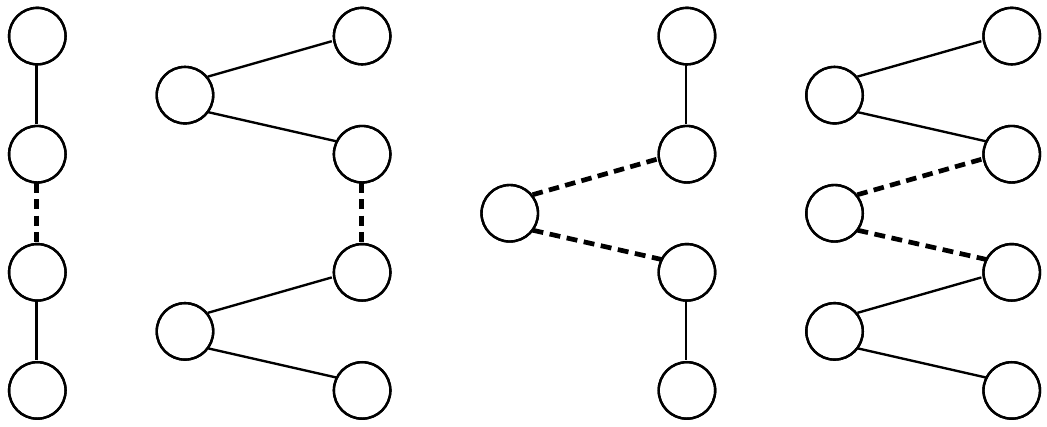}
	\caption{The only signed paths, with a single negative section, that are not additively graceful for $n \leq 2$.}
	\label{fig:fig12-used-in-second-paper}
\end{figure}

\begin{thm}
	\emph{\cite{2025PathsCycles}} 
	\label{characterization of add grac signed cycles}
	A signed cycle $S$ with $m$ positive and $n$ negative edges, is additively graceful if and only if one among the following 4 conditions is satisfied,
	\begin{enumerate}
		\item $n=0$ and $ m\equiv 0$ or $3 \pmod 4$.
		\item $n=1$ and $ m\equiv 1$ or $2 \pmod 4$.
		\item $n=2$, $ m\equiv 1$ or $2 \pmod 4$ and $S$ contains a single negative section.
		\item $S$ is the all negative signed cycle on $C_3$.
	\end{enumerate}		
\end{thm}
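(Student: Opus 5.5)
The plan is to prove necessity by a weighted counting argument and a parity argument, and sufficiency by explicit labelings. Let $S$ be a signed cycle with $m$ positive and $n$ negative edges, and let $f$ be an additively graceful labeling.

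\emph{Bounding $n$.} Every negative edge $uv$ has $f(u)+f(v)\le n$. So every vertex incident to a negative edge has label in $\{0,\dots,n\}$. For each vertex $v$, write $d^-(v)\in\{0,1,2\}$ for its number of incident negative edges. Summing the negative edge labels in two ways gives
\[ \sum_{v} f(v)\,d^-(v) = \frac{n(n+1)}{2}. \]
Suppose $S$ is not all negative. Then the negative edges form disjoint paths, which I call sections. If there are $k$ sections, they contain $n+k$ vertices, and exactly $n-k$ of them have $d^-(v)=2$. The left-hand side is minimised by giving the degree-$2$ vertices the smallest labels. Even for $k=1$ this minimum is
\[ 2\bigl(0+1+\dots+(n-2)\bigr)+(n-1)+n = (n-1)(n-2)+2n-1, \]
which exceeds $n(n+1)/2$ as soon as $n\ge 3$. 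Taking $k\ge 2$ only adds vertices with positive labels, so the bound gets worse. Hence $n\le 2$.

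If $S$ is all negative, then every vertex has $d^-(v)=2$. The identity becomes $2\sum f(v)\ge 2(0+\dots+(n-1))=n(n-1)$, which is compatible with $n(n+1)/2$ only if $n\le 3$. The cases $n\le 2$ give no cycle, so $n=3$. Then the labels are forced to be $0,1,2$, which is indeed a labeling of the all negative $C_3$.

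\emph{Single section when $n=2$.} The negative labels $1$ and $2$ force the vertex pairs $\{0,1\}$ and $\{0,2\}$. These pairs share the vertex labelled $0$, so the two negative edges are adjacent, i.e.\ $S$ has a single negative section.

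\emph{Parity.} Going around the cycle,
\[ \sum_{uv\in E}f^*(uv)\equiv\sum_{uv\in E}\bigl(f(u)+f(v)\bigr)=2\sum_v f(v)\equiv 0 \pmod 2, \]
because $|a-b|\equiv a+b \pmod 2$. Therefore $\frac{m(m+1)}{2}+\frac{n(n+1)}{2}$ must be even. For $n=0$ this gives $m\equiv 0,3 \pmod 4$. For $n=1$ and for $n=2$, where $\frac{n(n+1)}{2}$ is $1$ and $3$ respectively, it gives $m\equiv 1,2\pmod 4$. This completes necessity.

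\emph{Sufficiency.} For $n=0$, I would use Rosa's graceful labeling of $C_m$ for $m\equiv 0,3\pmod 4$; its labels lie in $\{0,\dots,m\}\subseteq\{0,\dots,m+1\}$. For $n=1$, the negative edge must join the labels $0$ and $1$. I then need a positive path with $m$ edges from the vertex labelled $1$ to the vertex labelled $0$, using the remaining labels from $\{2,\dots,m+1\}$, whose absolute differences are exactly $1,\dots,m$. For $n=2$, the negative section is the path $1$--$0$--$2$. I need a positive path with $m$ edges from $1$ to $2$ on labels from $\{3,\dots,m+2\}$, again with differences $1,\dots,m$.

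The main obstacle is building these paths explicitly in the cases $m\equiv 1$ and $m\equiv 2\pmod 4$. I would first try the standard zigzag (alternating high--low) pattern, which realises all the differences $m,m-1,\dots$. I would then insert a single "jump" at the position dictated by $m \bmod 4$, as in Rosa's cycle construction, so that the path closes up at the required endpoint. The formulas would be checked on small $m$, e.g.\ $m=1,2,5,6$, before stating them in general.
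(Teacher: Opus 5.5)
The paper only quotes this theorem from its companion work on signed paths and cycles and gives no proof, so I am judging your proposal on its own.

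Your necessity half is correct and complete:
\begin{itemize}
\item The identity $\sum_v f(v)\,d^-(v)=n(n+1)/2$, with the rearrangement bound, rules out $n\ge 3$. In the all negative case it forces $C_3$ with labels $0,1,2$.
\item The labels $1$ and $2$ force both negative edges through the vertex labelled $0$.
\item The congruence $|a-b|\equiv a+b \pmod 2$ gives exactly the residue conditions.
\end{itemize}
The only loose spot is ``taking $k\ge 2$ only adds vertices with positive labels''. It is cleaner to observe that the $n+k$ vertices on negative sections carry distinct labels in $\{0,\dots,n\}$, which already forces $k\le 1$.

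The genuine gap is sufficiency for conditions 2 and 3. You correctly reduce them to finding a positive path with differences $1,\dots,m$ between prescribed endpoints. After that you only describe a search strategy (zigzag plus one jump, to be tested on small $m$). No labeling is written down or verified, so the ``if'' direction for $n=1,2$ is not proved.

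You do not need new formulas to close this. Since $m\equiv 1,2\pmod 4$ exactly when $m+2\equiv 3,0 \pmod 4$, Rosa's theorem gives a graceful labeling $g$ of $C_{m+2}$.
\begin{itemize}
\item Difference $m+2$ forces an edge $\{0,m+2\}$. Difference $m+1$ forces an edge $\{0,m+1\}$ or $\{1,m+2\}$.
\item So, after replacing $g$ by $m+2-g$ if necessary, the vertex $v^*$ with $g(v^*)=m+2$ has its two neighbours labelled $0$ and $1$.
\item For condition 2: deleting $v^*$ leaves a path from $0$ to $1$ with labels in $\{0,\dots,m+1\}$ and differences exactly $1,\dots,m$. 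Closing it with a negative edge gives the required labeling (here $m\ge 2$, since $S$ is a cycle).
\item For condition 3: keep $C_{m+2}$, put $h(v^*)=0$ and $h(v)=g(v)+1$ for $v\ne v^*$, and declare the two edges at $v^*$ negative. They get labels $1$ and $2$, the other $m$ edges keep the differences $1,\dots,m$, and $h$ is injective into $\{0,\dots,m+2\}$.
\end{itemize}
In other words, your $n=2$ path is just your $n=1$ path shifted by $1$. Both come from deleting the top vertex of a graceful $C_{m+2}$.
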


We undertake, in this paper, a systematic investigation of additively graceful signed stars and double stars. By a {\it signed star} or a {\it signed double star} we mean a signed graph whose underlying graph (graph obtained by ignoring the parity of edges), is a star or a double star respectively. Our primary objective is to identify conditions that guarantee the existence, uniqueness or non-existence of additively graceful labelings on signed stars and double stars.

\begin{obs}
	\label{obs:g=+-f+-k gives f*=g*}
	\emph{\cite{2025PathsCycles}}
	Let $f$ be a vertex labeling in graph $G$ which induces an edge labeling given by $f^*(uv)= |f(u)-f(v)|$. Then for any integer $k$, both the vertex labelings $g= f +k$ as well as $g=-f+k$ satisfy $f^* = g^*$.
\end{obs}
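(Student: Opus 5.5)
The plan is to compute directly from the definition of the induced edge label, $f^*(uv)=|f(u)-f(v)|$, and show that for each edge $uv$ of $G$ the value $g^*(uv)$ equals $f^*(uv)$. The observation holds for every $k$ and every edge, and it is purely pointwise: no property of $G$ or of injectivity is needed. So I would fix an arbitrary integer $k$ and an arbitrary edge $uv\in E(G)$, and treat the two forms of $g$ as two separate cases.

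\emph{Case $g=f+k$.} Here
\[
g^*(uv)=|g(u)-g(v)|=|(f(u)+k)-(f(v)+k)|=|f(u)-f(v)|=f^*(uv),
\]
since the constant $k$ cancels.

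\emph{Case $g=-f+k$.} Here
\[
g^*(uv)=|(-f(u)+k)-(-f(v)+k)|=|f(v)-f(u)|=|f(u)-f(v)|=f^*(uv),
\]
where the cancellation of $k$ is followed by the symmetry of the absolute value, $|x|=|-x|$.

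Since $uv$ was arbitrary, $g^*=f^*$ as functions on $E(G)$. There is no real obstacle. The only point worth a remark is that $g$ is again injective whenever $f$ is, because both $x\mapsto x+k$ and $x\mapsto -x+k$ are bijections of $\mathbb{Z}$, so $g$ is a genuine vertex labeling. This matters when the observation is later applied to labelings of signed graphs: there the transformation preserves the positive-edge labels, while the negative-edge sums $f(u)+f(v)$ generally change.
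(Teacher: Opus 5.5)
Your proof is correct: cancelling $k$ and then using $|x|=|-x|$ for the reflected labeling is exactly the intended justification. The paper gives no proof of its own here (it cites earlier work and treats the fact as immediate), and your side remarks on injectivity and on negative-edge sums changing are accurate.
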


Let $f$ be a vertex labeling of a $(p,q)$ graph $G$ and let $M$ be an upper bound for $f$ on $V(G)$. Then $M-f$ is called the \emph{complementary labeling of $f$ with respect to $M$}. In particular, if $M= \max_{v \in V(G)}{f(v)}$, then $M-f$ is simply called the \emph{complementary labeling} of $f$.

\begin{obs}
	Let $f$ be a vertex labeling in graph $G$ which induces an edge labeling given by $f^*(uv)= |f(u)-f(v)|$. If $M > f(v)~ \forall~ v \in V(G)$ and if $g$ is the complementary labeling of $f$ with respect to $M$, then, $f^*=g^*$.
\end{obs}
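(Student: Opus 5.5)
The plan is to reduce the statement to Observation~\ref{obs:g=+-f+-k gives f*=g*} with $k=M$, and then check directly the one extra point it needs. Let $g = M - f$, that is, $g(v) = M - f(v)$ for every $v \in V(G)$. This is exactly the labeling $-f + k$ with $k = M$.

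First, $g$ is again a vertex labeling, so the observation applies to it. The map $x \mapsto M - x$ is a bijection of $\mathbb{Z}$, so $g$ is injective because $f$ is. Since $M > f(v)$ for all $v$, every label $g(v)$ is a positive integer; in particular $g$ takes values in the nonnegative integers, as a labeling in this paper's setting should.

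Second, the induced edge labels agree. For every edge $uv$ we compute
\[
g^*(uv) = |g(u) - g(v)| = |(M - f(u)) - (M - f(v))| = |f(v) - f(u)| = f^*(uv).
\]
Hence $g^* = f^*$ on $E(G)$, which is precisely the instance $k = M$ of Observation~\ref{obs:g=+-f+-k gives f*=g*}.

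No step is a real obstacle. The only point needing care is that the hypothesis $M > f(v)$ keeps the complementary labeling nonnegative and injective; the equality of induced edge labels is a one-line computation.
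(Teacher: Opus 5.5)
Your proof is correct and matches the paper's intent. The paper gives no separate proof because this is just the case $g=-f+k$ with $k=M$ of \Cref{obs:g=+-f+-k gives f*=g*}, checked by the same one-line computation $|g(u)-g(v)|=|f(u)-f(v)|$. The hypothesis $M>f(v)$ only ensures the labels stay positive; the equality $f^*=g^*$ does not need it.
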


For a signed graph $S$, with a vertex labeling $f$, we shall use the notation  $f( w_1,w_2,\dots,w_k )$ to denote the ordered tuple $( f(w_{1}), f(w_{2}), \dots, f(w_{k}) )$  of vertex labels where $ w_1,w_2,\dots,w_k $ are some $k$ vertices in $S$. Similarly, if $f^*$ is the induced edge labeling then, $f^*(u_1v_1,u_2v_2,\dots,u_kv_k )$ shall denote $(f^*(u_1v_1),f^*(u_2v_2),\dots,f^*(u_kv_k) )$ where $ u_1v_1,u_2v_2,\dots,u_kv_k $ are some $k$ edges in $S$.

\section{Basic Results}
\label{sec: basic results}
In this section we present some preliminary observations as well as a characterization of additively graceful signed stars.
The following are a couple of very basic observations, but we state them here since they will be used repeatedly in our discussions. \Cref{grace implies add_grace_signed graph} follows directly from the definition of graceful graph and additively graceful signed graph. 

\begin{obs} \label{grace implies add_grace_signed graph}
	If a graph $G$ is graceful then as a signed graph with $n=0$ it is an additively graceful signed graph.
\end{obs}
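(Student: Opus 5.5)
The plan is to unwind the two definitions and compare them directly in the special case $n=0$. Let $G$ be a graceful $(p,q)$ graph, and regard it as a signed graph $S$ in which every edge is positive. Then $S$ is a $(p,m,n)$ signed graph with $m=q$ and $n=0$. I will take a graceful labeling $f:V(G)\rightarrow\{0,1,\dots,q\}$ and show that the same map $f$ is an additively graceful labeling of $S$.

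\textbf{Codomain.} For an additively graceful labeling of $S$, the admissible labels are $\{0,1,\dots,m+\lceil\frac{n+1}{2}\rceil\}$. With $m=q$ and $n=0$ this is $\{0,1,\dots,q+1\}$, since $\lceil \tfrac{1}{2}\rceil=1$. This set contains $\{0,1,\dots,q\}$, so $f$ is an injection into the required codomain.

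\textbf{Negative edges.} Since $E^-(S)=\emptyset$, the condition on negative edges asks that $\{f^*(uv):uv\in E^-(S)\}=\emptyset$ equal $\{1,\dots,n\}=\{1,\dots,0\}=\emptyset$. This holds vacuously.

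\textbf{Positive edges.} Every edge is positive and receives $|f(u)-f(v)|$, exactly as in the graceful labeling. Since $f$ is injective, each such difference is at least $1$; it is also at most $q$, because all labels lie in $\{0,\dots,q\}$. By gracefulness these $q$ differences are distinct. Hence they are $q$ distinct integers in $\{1,\dots,q\}$, so they form the whole set $\{1,\dots,q\}=\{1,\dots,m\}$. Therefore $f$ is an additively graceful labeling of $S$.

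The only point needing any care is the last step. The definition of graceful labeling asks only for distinct edge labels, whereas the signed definition asks for the exact set $\{1,\dots,m\}$; the counting argument above closes that gap. The rest is routine.
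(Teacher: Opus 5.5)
Your proposal is correct and takes the same approach as the paper, which simply notes that the observation follows directly from the definitions of a graceful graph and an additively graceful signed graph. Your counting step (that $q$ distinct differences lying in $\{1,\dots,q\}$ must exhaust that set), together with the check that the codomain $\{0,\dots,q+1\}$ contains $\{0,\dots,q\}$, makes explicit what the paper leaves implicit.
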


 Also, since $0+1$ and $0+2$ are the only ways in which 1 and 2 can be expressed as the sum of two distinct, non-negative integers, we obtain \Cref{obs: two negative edges must be adjacent in add grace sigraph}.

\begin{obs} \label{obs: two negative edges must be adjacent in add grace sigraph}
	In an additively graceful labeling of a signed graph $S$ having two or more negative edges, the negative edges labeled $1$ and $2$ must be adjacent to the vertex labeled $0$.
\end{obs}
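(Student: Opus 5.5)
The statement is elementary, so the plan is a direct argument from the definition of an additively graceful labeling followed by a small counting step. Let $f$ be an additively graceful labeling of $S$, and suppose $S$ has $n \ge 2$ negative edges. By definition the induced labels on negative edges are exactly $\{1,2,\dots,n\}$. Since $n\ge 2$, there is a negative edge $e_1=uv$ with $f(u)+f(v)=1$ and a negative edge $e_2=xy$ with $f(x)+f(y)=2$.

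The key step is to decompose these two sums using only the facts that $f$ is injective and takes non-negative integer values. Because $u\neq v$, we have $f(u)\neq f(v)$. The only way to write $1$ as a sum of two distinct non-negative integers is $0+1$, so $\{f(u),f(v)\}=\{0,1\}$. Similarly, $2$ cannot be $1+1$, since the two labels are distinct. So $\{f(x),f(y)\}=\{0,2\}$.

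Hence both $e_1$ and $e_2$ have an endpoint whose label is $0$. Injectivity of $f$ means exactly one vertex $w$ carries the label $0$. Therefore $w$ is an endpoint of both $e_1$ and $e_2$. These two edges are distinct, because their induced labels differ. So they are adjacent, sharing the vertex labeled $0$, which is the claim.

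There is no real obstacle here; the argument needs only non-negativity and injectivity. The one point to state explicitly is that "$1+1$" is excluded for the label $2$ precisely by injectivity. Without that, the edge labeled $2$ need not touch the vertex labeled $0$.
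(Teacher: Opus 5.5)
Your proposal is correct and matches the paper's reasoning: the paper justifies the observation by noting that $0+1$ and $0+2$ are the only ways to write $1$ and $2$ as sums of two distinct non-negative integers. You also spell out that injectivity gives a unique vertex labeled $0$, so both edges meet at it; the paper leaves this step implicit.
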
 

\begin{thm}
	\label{thm: add grace tree requires n<=2}
	If a $(p,m,n)$ signed graph $S$ on a tree is additively graceful then $n\leq2$. Further, if $n=1,2$ and if $f$ is an additively graceful labeling  of $S$ then, $f(V(S))=\{0,1,\dots,m+\ceil{\frac{n+1}{2}}\}$
\end{thm}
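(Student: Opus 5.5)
Counting is the main tool. A tree has $p=q+1=m+n+1$ vertices, and $f$ is injective into $\{0,1,\dots,m+\lceil\frac{n+1}{2}\rceil\}$, a set of size $m+\lceil\frac{n+1}{2}\rceil+1$. So $m+n+1\le m+\lceil\frac{n+1}{2}\rceil+1$, which means $n\le\lceil\frac{n+1}{2}\rceil$. For $n\ge 3$ we have $\lceil\frac{n+1}{2}\rceil\le\frac{n+2}{2}<n$, so this fails. Hence $n\le 2$.

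For the second part, check the two cases directly. When $n=1$, $\lceil 1\rceil=1$, so the codomain has size $m+2$, and $p=m+2$. When $n=2$, $\lceil 3/2\rceil=2$, so the codomain has size $m+3$, and $p=m+3$. In both cases $|V(S)|$ equals the size of the codomain, so the injection $f$ is a bijection. Therefore $f(V(S))=\{0,1,\dots,m+\lceil\frac{n+1}{2}\rceil\}$.

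No step looks hard; the only point needing care is the inequality $\lceil\frac{n+1}{2}\rceil<n$ for $n\ge3$. Note that the argument does not use the edge-label conditions at all. It only uses injectivity and the fact that a tree has $p=q+1$.
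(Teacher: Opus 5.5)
Your proof is correct and uses the same pigeonhole argument as the paper. A tree has $m+n+1$ vertices. This exceeds the $m+\lceil\frac{n+1}{2}\rceil+1$ available labels when $n\ge 3$, and equals it when $n=1,2$, which forces the injection $f$ to be a bijection onto the label set. Your check that $\lceil\frac{n+1}{2}\rceil\le\frac{n+2}{2}<n$ for $n\ge 3$ is just a slightly more explicit form of the paper's inequality $\lceil\frac{n+1}{2}\rceil\le n-1$.
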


\begin{proof}
	Suppose $n>2$ and suppose $f:V(S) \rightarrow \{0,1,\dots,m+\ceil{\frac{n+1}{2}} \}$ is a additively graceful labeling of $S$. Since $n>2$ hence, $\ceil{\frac{n+1}{2}} \leq n-1$. This implies that there are at most $m+n$ vertex labels to choose from, so as to label the $m+n+1$ vertices, which is impossible.
	
	Further, if $n=1$ or $2$ then, $m+\ceil{\frac{n+1}{2}}=m+n$. Hence, there are exactly $m+n+1$ vertex labels available to $f$, so as to label as many vertices. The theorem follows.
\end{proof}
		
		We say that two vertex labelings $f$ and $g$, on a graph (or signed graph) are {\it  equivalent up to complementary labeling} if one is the complementary labeling of the other with respect to some $M$. In this case we write $f \sim g$.
		
		\begin{figure}[h]
			\centering
			\includegraphics[width=1\linewidth]{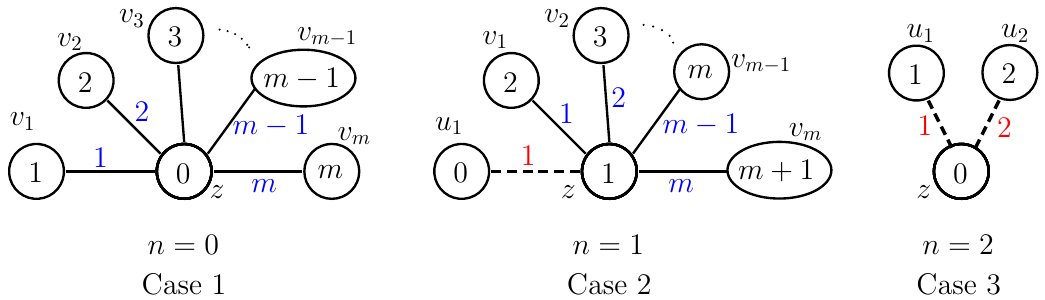}
			\caption{Additively graceful signed stars}
			\label{fig:fig2018used}
		\end{figure}

		\begin{thm}
		\label{thm: stars are additively graceful iff n=1}
		A $(p,m,n)$ signed graph $S$ on a star $K_{1,k}$ is additively graceful if and only if $n=0,1$ or $S$ is the path $P_3$, both of whose edges are negative.			
		Further, whenever  $n=1$ or $2$, the additively graceful labelings obtained corresponding to each permissible value of $m$, are unique up to rearrangement of labels among the $u_i$'s and $v_i$'s, whereas, when $n=0$, there are exactly 2 distinct labelings, for each value of $m \geq 2$,
		up to equivalence under complementary labelings and rearrangement of labels among the $u_i$'s and $v_i$'s. 
		\end{thm}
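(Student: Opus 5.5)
Throughout, let $c$ denote the centre of the star, let the $u_i$ be the pendants joined to $c$ by positive edges and the $v_i$ the pendants joined to $c$ by negative edges. By \Cref{thm: add grace tree requires n<=2} we may assume $n\le 2$. I would then treat $n=0,1,2$ separately. In each case I would pin down the label $a=f(c)$ of the centre and then read off the pendant labels. This gives sufficiency (by exhibiting the labeling) and necessity, uniqueness or the count of labelings, all at once.

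\emph{Case $n=2$.} By \Cref{obs: two negative edges must be adjacent in add grace sigraph}, the two negative edges labelled $1$ and $2$ are both incident with the vertex labelled $0$. Since all edges meet at $c$, this forces $f(c)=0$ and $f(v_1,v_2)=(1,2)$. By \Cref{thm: add grace tree requires n<=2}, the labels used are exactly $\{0,1,\dots,m+2\}$, so the $u_i$ receive $3,\dots,m+2$. The positive edge labels are then $3,\dots,m+2$, which cannot equal $\{1,\dots,m\}$ unless $m=0$. Hence $S$ is the all negative $P_3$, with the unique labeling $(1,0,2)$.

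\emph{Case $n=1$.} Again by \Cref{thm: add grace tree requires n<=2}, all labels in $\{0,\dots,m+1\}$ are used. The single negative edge has label $f(c)+f(v_1)=1$, so $\{f(c),f(v_1)\}=\{0,1\}$. If $f(c)=0$, then the $u_i$ get $2,\dots,m+1$, giving positive edge labels $2,\dots,m+1$, which fails. So $f(c)=1$, $f(v_1)=0$, and the $u_i$ get $2,\dots,m+1$. This yields positive edge labels $1,\dots,m$ for every $m$, and the labeling is unique up to permuting the $u_i$.

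\emph{Case $n=0$.} This is the step needing the most care, since the label range is $\{0,\dots,m+1\}$ and one label is omitted, so the argument of \Cref{thm: add grace tree requires n<=2} does not apply. Each $d\in\{1,\dots,m\}$ must occur as $|a-f(u_i)|$ for exactly one $i$. So the pendants are $a\pm d$, with each $d$ assigned to one side. A label $a-d$ is available only if $d\le a$, and $a+d$ is available only if $d\le m+1-a$. The edge label $d=m$ therefore forces $a\le 1$ or $a\ge m$.
\begin{itemize}
\item For $a=0$, the pendants are forced to be $1,\dots,m$.
\item For $a=1$, the value $d=1$ must go left, because $d=m$ must go right and $2$ would then clash. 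This gives pendants $0,3,4,\dots,m+1$.
\end{itemize}
The cases $a=m+1$ and $a=m$ are the complements, with respect to $M=m+1$, of $a=0$ and $a=1$. For $m\ge2$, the labelings with $a=0$ and $a=1$ are not equivalent: a complement carries the centre label to $M-a$, and no $M$ maps one configuration onto the other. So there are exactly two classes. (For $m=1$ the two coincide, which is why the statement restricts to $m\ge 2$.) Existence for all $m$ in this case also follows from \Cref{grace implies add_grace_signed graph} and \Cref{thm:caterpillar is graceful}.

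Combining the cases gives the characterization and the stated uniqueness and count. The main obstacle is the $n=0$ case: showing carefully that the side-assignment of the differences is forced, and checking non-equivalence of the two surviving labelings.
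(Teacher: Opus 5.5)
\paragraph{Error in the $n=0$ case.} Your cases $n=2$ and $n=1$ are correct, and more explicit than the paper, which leaves uniqueness to the reader. The error is in the $n=0$ case, subcase $a=1$.

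Your claim that the difference $d=1$ must go to the left is false. With $a=1$, every $d\ge 2$ is forced to the right, so those pendants receive $3,\dots,m+1$. Putting $d=1$ on the right gives the label $2$, which clashes with nothing. So centre $1$ with pendants $2,3,\dots,m+1$ (omitting $0$) is a valid additively graceful labeling that your enumeration misses; it is the $a=0$ labeling shifted by one. Symmetrically, for $a=m$ you miss centre $m$ with pendants $0,1,\dots,m-1$. There are six labelings, not four.

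\paragraph{Consequence for the count.} Because of this, your count of equivalence classes is not justified as written. The conclusion of exactly two classes is still true, but only because both missed labelings lie in the class of the $a=0$ labeling $f$. They are $m-f$ and $(m+1)-(m-f)=f+1$. To see this you must compose complements, which produces translations. The paper does exactly this when it lists $\tilde f$ and the complements of $f$, $\tilde f$ and $g$ with respect to $m+1$.

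Your invariant, the centre label under a single map $a\mapsto M-a$, cannot do this job. Centre labels $0$ and $1$ occur in the same class, and a single-complement check does not rule out chains of complements. Use instead an invariant preserved by every complement, such as the spread $\max f-\min f$. It equals $m$ for $f$, $m-f$, $(m+1)-f$ and $f+1$, and equals $m+1$ for $g$ and $(m+1)-g$. This is the paper's argument.

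With the $a=1$ and $a=m$ subcases corrected and this invariant in place, your difference-placement enumeration gives a complete proof of the $n=0$ count. That is more than the paper spells out, since it leaves exhaustiveness to the reader.
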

		
		\begin{proof}
			Let $u_iz$, $i=1,2,\dots,n$ be the negative edges and let  $v_iz$, $i=1,2,\dots,m$ be the positive edges in $S$.
			\\\\
			Suppose $n=0,1$ or $S$ is the all negative signed path on $P_3$.
			\\\\
			{\bf Case 1}: $n=0$.
			\\
			Define $f:V(S)\rightarrow\{0,1,2,\dots,m+1\}$ by $f(z)=0$ and $f(v_i)=i$, for $i=1,2,\dots, m$. \Everify, $f$ is an additively graceful labeling.
			\\\\
			{\bf Case 2}: $n=1$.
			\\
			Define $f:V(S)\rightarrow\{0,1,2,\dots,m+1\}$ by $f(z)=1$, $f(u_1)=0$ and $f(v_i)=i+1$, for $i=1,2,\dots, m$. \Everify, $f$ is an additively graceful labeling.
			\\\\
			{\bf Case 3}: $S$ is the all negative signed path on $P_3$.
			\\
			Let $P_3=u_1,z,u_2$. Define $f:V(S)\rightarrow\{0,1,2\}$ by $f(u_1,z,u_2)=(1,0,2)$. \Everify, $f$ is an additively graceful labeling.
			\\\\
			Conversely, let $f:V(S)\rightarrow\{0,1,2,\dots,m+\ceil{\frac{n+1}{2}} \}$ be a additively graceful labeling of $S$. By \Cref{thm: add grace tree requires n<=2}, we know that $n\leq2$. If we assume that $n=2$, then by \Cref{obs: two negative edges must be adjacent in add grace sigraph}, $f(z)=0$ and \wlg, $f(u_1,u_2)=(1,2)$. Notice that, $S$ cannot contain a positive edge because in such a case, it would be impossible to obtain the edge label 1 on any positive edge in $S$. It follows that, $S$ is the all negative signed path on $P_3$.
			\\\\
			Further, consider $f$ as defined in Case 1 above. Since $n=0$ hence, $m+\ceil{\frac{n+1}{2}}=m+1$. 
			Define $g:V(S)\rightarrow\{0,1,2,\dots,m+1\}$ by $g(z)=1$, $g(v_1)=0$ and $g(v_i)=i+1$, for $i=2,\dots, m$. \Everify, $g$ is an additively graceful labeling of $S$. The maximum distance between vertex labels in $f$ is $m$ while the same is $m+1$ in $g$. Since complementary labelings preserve this maximum gap, we conclude that $f \nsim g$. Now let $\tilde{f}$ be the complementary labeling of $f$ with respect to $m$. \Everify, $\tilde{f}$ and the complementary labelings of  $f$, $\tilde{f}$ and $g$, with respect to $m+1$ are also additively graceful labelings of $S$.
			We leave it to the reader to verify that the additively graceful labelings obtained here and in the 3 cases above, are the only ones possible, up to rearrangement of labels among the $u_i$'s and $v_i$'s. 
		\end{proof}

		\section{Additively Graceful Signed Graphs on Double Stars with $n \leq 1$}
		\label{sec: n <=1}
		In this section we investigate additively graceful signed double stars with less than 2 negative edges. 
		The following theorem deals with the case $n=0$ and is an immediate consequence of \Cref{grace implies add_grace_signed graph}  and \Cref{thm:caterpillar is graceful}
		
		\begin{thm}
			\label{thm: double star with n=0 is add grace}
			Every $(p,m,0)$ signed graph $S$ on a double star is additively graceful.
		\end{thm}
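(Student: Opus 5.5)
The plan is to derive the statement directly from \Cref{grace implies add_grace_signed graph} and \Cref{thm:caterpillar is graceful}, as the text preceding the theorem suggests. Let $S$ be a $(p,m,0)$ signed graph whose underlying graph $G$ is a double star, obtained by joining the centres of $K_{1,r}$ and $K_{1,s}$. Then $G$ is a tree with $q=r+s+1=m$ edges.

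The first step is to check that $G$ is a caterpillar. Deleting all pendant vertices of $G$ leaves the single edge joining the two centres, which is a path, so $G$ is a caterpillar. By \Cref{thm:caterpillar is graceful}, $G$ admits a graceful labeling $f:V(G)\rightarrow\{0,1,\dots,m\}$ whose induced labels $|f(u)-f(v)|$ are all distinct. Since there are $m$ edges and the possible labels lie in $\{1,\dots,m\}$, the induced labels are exactly $\{1,\dots,m\}$.

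The second step is to apply \Cref{grace implies add_grace_signed graph}. Because $n=0$, every edge of $S$ is positive and the codomain allowed for an additively graceful labeling is $\{0,1,\dots,m+\ceil{\frac{1}{2}}\}=\{0,1,\dots,m+1\}$, which contains $\{0,\dots,m\}$. So $f$ itself is an additively graceful labeling of $S$: the positive edge labels form $\{1,\dots,m\}$, and the condition on negative edges is vacuous.

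I do not expect a genuine obstacle here. The only point needing care is the degenerate case in which one side of the double star has no pendants; the graph is then a star and still a caterpillar. To keep the proof self-contained I would also give an explicit labeling instead of citing the general caterpillar theorem. Let $x$ and $y$ be the centres, $x_1,\dots,x_r$ the pendants of $x$ and $y_1,\dots,y_s$ the pendants of $y$. Set $f(x)=0$, $f(y)=m$, $f(y_j)=j$ for $j=1,\dots,s$, and $f(x_i)=m-i$ for $i=1,\dots,r$. The vertex labels are distinct because $s<m-r$. The edge $xy$ receives $m$, the edges $xx_i$ receive $m-1,\dots,m-r$, and the edges $yy_j$ receive $m-1,\dots,m-s$. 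These clash, so this choice needs adjusting. The fix is to put $f(y_j)=m-r-j$ instead, giving the edges $yy_j$ the labels $r+j$. Then the edge labels are $m$, $\{m-r,\dots,m-1\}=\{s+1,\dots,s+r\}$ and $\{r+1,\dots,r+s\}$; these still overlap when $r\neq s$, so the explicit construction needs more tuning. If that tuning proves fiddly, I would simply rely on \Cref{thm:caterpillar is graceful} as above, which suffices.
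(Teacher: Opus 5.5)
Your argument is correct and matches the paper's: the double star is a caterpillar, hence graceful by \Cref{thm:caterpillar is graceful}, and \Cref{grace implies add_grace_signed graph} finishes it (your check that $\{0,\dots,m\}\subseteq\{0,\dots,m+1\}$ is the only detail needed). The abandoned explicit construction is unnecessary, but it works if you keep $f(x)=0$ and set $f(y)=s+1$, $f(x_i)=s+1+i$ for $i=1,\dots,r$, and $f(y_j)=j$ for $j=1,\dots,s$; the edges $xx_i$ then receive $\{s+2,\dots,m\}$, the edge $xy$ receives $s+1$, and the edges $yy_j$ receive $\{1,\dots,s\}$.
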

		
		\noindent We now investigate $(p,m,1)$ additvely graceful signed double stars.
		
		\begin{thm}
			\label{thm: double star with one pendant neg edge is add grace}
			Every $(p,m,1)$ signed graph $S$ on a double star, with a single pendant negative edge is additively graceful.
		\end{thm}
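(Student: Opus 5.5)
We give an explicit labeling, in the spirit of the constructive cases in the proof of \Cref{thm: stars are additively graceful iff n=1}. Let the two centres be $x$ and $y$. Let $x$ carry the $r$ pendants $u, x_1,\dots,x_{r-1}$, where $xu$ is the unique negative edge. Let $y$ carry the $s$ pendants $y_1,\dots,y_s$. Then $m=(r-1)+s+1=r+s$. By \Cref{thm: add grace tree requires n<=2}, any additively graceful labeling must use every label in $\{0,1,\dots,m+1\}$ exactly once. The negative edge must receive the label $1$, so $\{f(x),f(u)\}=\{0,1\}$. We choose $f(u)=0$ and $f(x)=1$. The problem is then to distribute $2,\dots,m+1$ over $y$, the $y_j$ and the $x_i$ so that the $m$ positive edges receive the differences $1,\dots,m$.

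The key idea is to make $y$ and its pendants produce the small differences $1,\dots,s+1$, and the remaining pendants of $x$ produce the large differences $s+2,\dots,m$. Concretely, we set
\[
f(u)=0,\quad f(x)=1,\quad f(y)=s+2,\quad f(y_j)=j+1 \ (1\le j\le s),\quad f(x_i)=s+2+i \ (1\le i\le r-1).
\]

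We then check the following.
\begin{enumerate}
\item The labels are injective and exactly fill $\{0,\dots,m+1\}$: the $y_j$ take $2,\dots,s+1$, $y$ takes $s+2$, and the $x_i$ take $s+3,\dots,s+r+1=m+1$.
\item The negative edge gives $f^*(xu)=0+1=1$.
\item The edge $xy$ gives $|1-(s+2)|=s+1$.
\item The edges $yy_j$ give $(s+2)-(j+1)=s+1-j$, which runs over $s,\dots,1$.
\item The edges $xx_i$ give $(s+2+i)-1=s+1+i$, which runs over $s+2,\dots,m$.
\end{enumerate}
So the positive edge labels are exactly $\{1,\dots,m\}$, as required.

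The only potential obstacle is to confirm that this bookkeeping is consistent in the degenerate cases. When $r=1$ the negative pendant is the only pendant at $x$ and the block $x_i$ is empty; when $s$ is small the block $y_j$ is short. In both situations the formulas remain valid, and they remain valid for all $r,s\ge 1$. Finding the split itself was the real difficulty: more naive choices, such as $f(y)=m+1$ or giving the $x$-pendants the small differences, produce collisions.
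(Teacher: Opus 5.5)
Your proof is correct and is essentially the paper's own argument. Under the correspondence $x \leftrightarrow z_1$, $y \leftrightarrow z_2$, $s \leftrightarrow r$, $r-1 \leftrightarrow l$, your labeling is exactly the one the paper writes down, and your explicit check of the vertex labels and the positive edge labels is the verification the paper leaves as ``easily verified.''
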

		
		\begin{proof}
			\Wlg, we may assume that $S$ consists of a positive edge $z_1z_2$, to which are attached positive edges $z_1v_i$, $i=1,2, \dots, l$ and $z_2w_i$, $i=1,2, \dots, r$, while $z_1u$ is the lone negative edge.

			Define $f:V(S)\rightarrow\{0,1,\dots,l+r+2\}$ as follows. 
			\begin{equation*}
				f(x)= \begin{cases}
					0 & \text{for } x=u \\
					1 & \text{for } x=z_1\\
					r+2+i & \text{for } x=v_i~~~i=1,2,\dots,l\\
					r+2 & \text{for } x=z_2 \\
					1+i & \text{for } x=w_i~~~i=1,2,\dots,r\\
				\end{cases}
			\end{equation*}
			\Everify, $f(V(S))=\{0,1,\dots,l+r+2\}$ and $f$ is an additively graceful labeling of $S$.
		\end{proof}

		\begin{thm}
			\label{thm: double star with exactly one neg edge in middle is add grace}
			Let $S$ be the $(p,m,1)$ signed graph consisting of a negative edge, to which are attached $l$ positive pendant edges at one end and $r$ positive pendant edges at the other. Then, $S$ is additively graceful if and only if either $l=0$ or $r=0$.
		\end{thm}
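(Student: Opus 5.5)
The plan is to treat the two directions separately. Sufficiency comes almost for free from the star case, and necessity comes from a counting argument on the sum of the labels.

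\emph{Sufficiency.} If $l=0$ (the case $r=0$ is symmetric), then $S$ is a signed star $K_{1,r+1}$ centred at $z_2$, with exactly one negative edge $z_1z_2$. By \Cref{thm: stars are additively graceful iff n=1} with $n=1$, $S$ is additively graceful. Concretely, the Case~2 labeling works: $f(z_2)=1$, $f(z_1)=0$, and the positive pendants at $z_2$ receive $2,3,\dots,r+1$.

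\emph{Necessity.} Let $z_1z_2$ be the negative edge, let $v_1,\dots,v_l$ be the pendants at $z_1$ and $w_1,\dots,w_r$ the pendants at $z_2$, so that $m=l+r$.
\begin{enumerate}
\item By \Cref{thm: add grace tree requires n<=2}, any additively graceful labeling $f$ is a bijection onto $\{0,1,\dots,m+1\}$.
\item The single negative edge must receive label $1$. Hence $\{f(z_1),f(z_2)\}=\{0,1\}$, and the pendants carry exactly the labels $\{2,\dots,m+1\}$.
\item Since the situation is not symmetric in $l,r$, I consider both orientations. Suppose first $f(z_1)=0$ and $f(z_2)=1$. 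Then each edge $z_1v_i$ gets label $f(v_i)$, and each edge $z_2w_j$ gets label $f(w_j)-1$.
\item The positive edge labels must be exactly $\{1,\dots,m\}$, so I compare sums:
\[
\sum_{i} f(v_i)+\sum_{j}\bigl(f(w_j)-1\bigr)=\sum_{k=2}^{m+1}k - r ,
\]
which must equal $\sum_{k=1}^{m}k=\sum_{k=2}^{m+1}k-m$. Therefore $r=m$, i.e. $l=0$.
\item In the other orientation, $f(z_1)=1$ and $f(z_2)=0$, the identical computation with the roles of $v_i$ and $w_j$ swapped gives $l=m$, i.e. $r=0$.
\end{enumerate}

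There is no real obstacle. The one place requiring care is step~2: it relies on the full-range conclusion of \Cref{thm: add grace tree requires n<=2}, which forces the pendants onto exactly $\{2,\dots,m+1\}$, so that the sum identity is exact. It also helps to track the orientation explicitly, since $l$ and $r$ play asymmetric roles. Alternatively, one could argue through extremes: the label $m+1$ must sit at a pendant of the centre labelled $1$, and the label $2$ is needed to produce edge label $1$. However, the sum argument settles the whole question at once.
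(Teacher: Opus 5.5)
Your proof is correct, but you reach necessity by a different route than the paper.

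The paper assumes $l,r\geq 1$ and fixes $f(z_1)=0$, $f(z_2)=1$ by symmetry. It then forces the positive edge labels one at a time:
\begin{itemize}
\item edge label $1$ can only come from some $w_i$ labelled $2$;
\item edge label $2$ can then no longer come from a $v_i$, because vertex label $2$ is taken, so some $w_j$ must be labelled $3$;
\item continuing, the $w_i$ use up the labels $2,\dots,r+1$, and edge label $r+1\leq m$ (this is where $l\geq 1$ enters) can then be produced at neither $z_1$ nor $z_2$.
\end{itemize}

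You replace this chain with a single global count. Each pendant at the endpoint labelled $1$ loses exactly one unit between its vertex label and its edge label. The vertex labels $\{2,\dots,m+1\}$ exceed the required edge labels $\{1,\dots,m\}$ by exactly $m$ in total. Hence all $m$ pendants must hang at that endpoint.

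Your route is shorter and needs no step-by-step induction. It also gives the sharper conclusion that, in every additively graceful labeling, the endpoint labelled $0$ carries no positive pendants.

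The cost is that it depends essentially on the full-range (bijectivity) part of the tree theorem, which you rightly flagged. The paper's forcing argument uses only injectivity and non-negativity of the vertex labels. It is also the same local technique the paper reuses later, for instance for the double star with a negative middle edge and one negative pendant.
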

		
		\begin{proof}
			Let $z_1z_2$ be the negative edge, to which are attached positive edges $z_1v_i$, $i=1,2, \dots, l$ and $z_2w_i$, $i=1,2, \dots, r$.
			Suppose $l\geq1$ and $r\geq1$ and let $f:V(S) \rightarrow \{0,1,\dots,l+r+1\}$ be an additively graceful labeling of $S$. \Wlg, we may assume that $f(z_1)=0$ and $f(z_2)=1$. The only way to then obtain the edge label 1 on a positive edge is to  set $f(w_i)=2$ for some $i$. It follows that, the only way to now obtain the edge label 2 on a positive edge is to set $f(w_j)=3$ for some $j$, and so on. Hence,\wlg, we may set $f(w_i)=i+1$ for $i=1,2,\dots,r$. It is now impossible to obtain the required edge label $r+1$ on a positive edge.
			
			Conversely, if $l=0$ or $r=0$ then, $S$ reduces to a star with one negative edge, which by \Cref{thm: stars are additively graceful iff n=1}, is additively graceful.
		\end{proof}
		
		\section{ Additively Graceful Signed Double Stars with 2 Negative Edges}
		\label{sec: n=2}
		In this section, we investigate additively graceful signed double stars with two negative edges. We obtain the following existence and non-existence results.
		
		\begin{thm}
			\label{thm: double star with 2 neg pendants and r<2 is not grace elegant}
			Let $S$ be the $(p,m,2)$ signed graph consisting of a positive edge, to which are attached $l$ positive pendant edges and 2 negative pendant edges at one end as well as $r$ positive pendant edges at the other end.
			If $r<2$ then, $S$ is not additively graceful.
		\end{thm}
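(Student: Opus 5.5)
The plan is a direct counting argument on the forced labels, using \Cref{thm: add grace tree requires n<=2} and \Cref{obs: two negative edges must be adjacent in add grace sigraph}. Let $z_1z_2$ be the positive central edge, let $z_1u_1, z_1u_2$ be the two negative pendant edges, and let $z_1v_i$, $i=1,\dots,l$, and $z_2w_j$, $j=1,\dots,r$, be the positive pendant edges. Then $m=l+r+1$ and $n=2$. Suppose $f$ is an additively graceful labeling. By \Cref{thm: add grace tree requires n<=2}, $f(V(S))=\{0,1,\dots,l+r+3\}$, so $f$ is a bijection onto this set. By \Cref{obs: two negative edges must be adjacent in add grace sigraph}, the vertex labeled $0$ is the common end of both negative edges, namely $z_1$. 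Without loss of generality $f(u_1,u_2)=(1,2)$.

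Next I identify which positive edge labels are attainable. Every positive edge at $z_1$ (the edges $z_1v_i$ and $z_1z_2$) receives the label $|0-f(x)|=f(x)$, where $x$ is its other end. All such $x$ carry labels from $\{3,4,\dots,l+r+3\}$, because $0,1,2$ are already used by $z_1,u_1,u_2$. So every positive edge incident to $z_1$ has label at least $3$. Consequently the positive edge labels $1$ and $2$ must both occur among the $r$ edges $z_2w_j$, and these edges must carry two distinct labels.

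This immediately gives $r\geq 2$, contradicting the hypothesis $r<2$. Concretely, if $r=0$ there is no edge available for the label $1$. If $r=1$, the single edge $z_2w_1$ can supply only one of the labels $1$ and $2$. Either way $S$ is not additively graceful. There is no real obstacle here; the only point needing care is the use of \Cref{thm: add grace tree requires n<=2} to know that $0$, $1$ and $2$ are all used on $z_1,u_1,u_2$. That fact forces every label at the other ends of positive edges through $z_1$ to be at least $3$, and the rest of the argument is immediate.
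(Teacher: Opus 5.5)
Your proof is correct and follows the paper's own argument. You force $f(z_1,u_1,u_2)=(0,1,2)$ via \Cref{obs: two negative edges must be adjacent in add grace sigraph}, observe that every positive edge at $z_1$ then gets a label $\geq 3$, and conclude that the positive labels $1$ and $2$ can only come from the edges $z_2w_j$. Two small remarks: the appeal to \Cref{thm: add grace tree requires n<=2} is unnecessary, since the observation plus injectivity already places $0,1,2$ on $z_1,u_1,u_2$. Also, your split into $r=0$ (only label $1$ needed) and $r=1$ (where $m=l+2\geq 2$) correctly covers the case $l=r=0$, $m=1$; the paper handles that case by noting that $f^*(z_1z_2)\geq 3$ forces $m\geq 3$.
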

		
		\begin{proof}
			Let $z_1z_2$ be the positive edge, to which are attached positive edges $z_1v_i$, $i=1,2, \dots, l$ and $z_2w_i$, $i=1,2, \dots, r$, as well as negative edges $z_1u_1$ and $z_1u_2$.
			Suppose $f$ is an additively graceful labeling of $S$. By \Cref{obs: two negative edges must be adjacent in add grace sigraph} and \wlg, $f(z_1,u_1,u_2)=(0,1,2)$. It follows that, $f(z_2)\geq 3$. This implies that $f^*(z_1z_2) \geq 3$, which in turn requires the existence of positive edge labels 1 and 2. These can only be generated on the remaining edges adjacent to $z_2$. It follows that, $r\geq 2$.
		\end{proof}
		
		\begin{thm}
			\label{thm: double star with 2 neg pendants and l=r-2 is grace elegant}
			Let $S$ be the $(p,m,2)$ signed graph consisting of a positive edge, to which are attached $l$ positive pendant edges and 2 negative pendant edges at one end as well as $r$ positive pendant edges at the other end.	
			If $l=r-2$ then, $S$ is additively graceful.
		\end{thm}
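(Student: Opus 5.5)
The plan is to give an explicit labeling, following the style of the constructions in \Cref{thm: double star with one pendant neg edge is add grace} and \Cref{thm: stars are additively graceful iff n=1}. As in \Cref{thm: double star with 2 neg pendants and r<2 is not grace elegant}, let $z_1z_2$ be the positive edge. At $z_1$ are attached the positive pendants $z_1v_i$ ($i=1,\dots,l$) and the negative pendants $z_1u_1,z_1u_2$. At $z_2$ are attached the positive pendants $z_2w_i$ ($i=1,\dots,r$), where $r=l+2$. Then $m=l+r+1=2l+3$ and $n=2$, so by \Cref{thm: add grace tree requires n<=2} every label in $\{0,1,\dots,2l+5\}$ must be used exactly once.

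By \Cref{obs: two negative edges must be adjacent in add grace sigraph} we are forced to take $f(z_1,u_1,u_2)=(0,1,2)$. This already produces the negative labels $1,2$. Since $f(z_1)=0$, every positive edge at $z_1$ simply receives the label of its other endpoint.

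The labels $2l+4$ and $2l+5$ exceed $m$, so they cannot sit on neighbours of $z_1$ and must go to two of the $w_i$. I experimented with the smallest case $l=1$ (labels $0,\dots,7$). There the choice $f(z_2)=4$, $f(v_1)=5$, $f(w_1,w_2,w_3)=(3,6,7)$ works. This suggests placing $z_2$ at $l+3$, the $v$'s just above it, and the $w$'s at the two extremes. Define
\begin{equation*}
	f(x)= \begin{cases}
		0,1,2 & \text{for } x=z_1,u_1,u_2 \text{ respectively}\\
		l+3 & \text{for } x=z_2\\
		l+3+i & \text{for } x=v_i~~~i=1,2,\dots,l\\
		i+2 & \text{for } x=w_i~~~i=1,2,\dots,l\\
		2l+4,\ 2l+5 & \text{for } x=w_{l+1},\ w_{l+2} \text{ respectively.}
	\end{cases}
\end{equation*}

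The verification then proceeds in three steps.
\begin{enumerate}
	\item The vertex labels are exactly $\{0,1,\dots,2l+5\}$, and the set $\{3,\dots,2l+5\}$ is partitioned among $z_2$, the $v_i$ and the $w_i$.
	\item The positive edges at $z_1$ receive the labels $\{l+3,l+4,\dots,2l+3\}$.
	\item The positive edges at $z_2$ receive $|l+3-(i+2)|=l+1-i$ for $i=1,\dots,l$, which gives $\{1,\dots,l\}$, together with $(2l+4)-(l+3)=l+1$ and $(2l+5)-(l+3)=l+2$. So these edges carry exactly $\{1,\dots,l+2\}$.
\end{enumerate}
Together the positive labels are $\{1,\dots,2l+3\}=\{1,\dots,m\}$ and the negative labels are $\{1,2\}$, so $f$ is additively graceful.

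The only real obstacle is finding the right placement of $f(z_2)$. The natural first guess, $f(z_2)=2l+3$ with the two large labels on $w$'s producing differences $1,2$, forces the remaining $w$-labels to form a set symmetric under $x\mapsto 2l+3-x$. That is possible only for some parities, so I would reject it in favour of the central choice $f(z_2)=l+3$ above. Once this choice is made, the checks are routine. The degenerate case $l=0$ ($r=2$) needs no separate treatment, since the formula then gives $f(z_2,w_1,w_2)=(3,4,5)$, which works.
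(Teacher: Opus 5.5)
Your proposal is correct and is the same construction as the paper's. With $r=l+2$, the paper's labeling $f(z_2)=r+1$, $f(v_i)=r+1+i$, $f(w_i)=i+2$ for $i\le l$, and $f(w_{l+1}),f(w_{l+2})=2l+4,\,2l+5$ coincides exactly with yours. Your explicit verification, which the paper leaves to the reader, checks out.
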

		
		\begin{proof}
			Let $z_1z_2$ be the positive edge, to which are attached positive edges $z_1v_i$, $i=1,2, \dots, l$ and $z_2w_i$, $i=1,2, \dots, r$, as well as negative edges $z_1u_1$ and $z_1u_2$.
			We assume that,  $l=r-2$ and define $f:V(S)\rightarrow\{0,1,\dots,2r+1\}$ as follows. $f(z_1,u_1,u_2)=(0,1,2)$ 
			\begin{equation*}
				\label{eqn: double star pos edge in middle and l+n=r}
				f(x)= \begin{cases}
					
					r+1 & \text{for } x=z_2 \\
					r+1+i & \text{for } x=v_i~~~i=1,2,\dots,l\\
					i+2 & \text{for } x=w_i~~~i=1,2,\dots,l\\
					r+1+i & \text{for } x=w_i~~~i=l+1,l+2
				\end{cases}
			\end{equation*}
			\Everify, $f(V(S))=\{0,1,\dots,2r+1\}$ and $f$ is an additively graceful labeling of $S$.
		\end{proof}
		
		\begin{thm}
			\label{thm: double star with 2 neg pendants and l=odd l>=3 r=3 is not add graceful}
			Let $S$ be the $(p,m,2)$ signed graph consisting of a positive edge, to which are attached $l$ positive pendant edges and $2$ negative pendant edges at one end as well as $3$ positive pendant edges at the other end.
			If $l$ is odd and $l\geq3$ then, $S$ is not additively graceful.
		\end{thm}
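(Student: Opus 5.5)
The plan is to pin down the labeling almost completely using \Cref{thm: add grace tree requires n<=2} and \Cref{obs: two negative edges must be adjacent in add grace sigraph}, and then to reduce everything to a small arithmetic condition on three labels. Let $z_1z_2$ be the positive edge, with positive edges $z_1v_i$ ($i=1,\dots,l$), negative edges $z_1u_1,z_1u_2$, and positive edges $z_2w_1,z_2w_2,z_2w_3$. Here $m=l+4$ and $n=2$. So $S$ has $l+7$ vertices and the label set is $\{0,1,\dots,l+6\}$. By \Cref{thm: add grace tree requires n<=2} every label is used exactly once. By \Cref{obs: two negative edges must be adjacent in add grace sigraph}, and without loss of generality, $f(z_1,u_1,u_2)=(0,1,2)$. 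Hence $z_2$, the $v_i$ and the $w_j$ receive exactly the labels $\{3,\dots,l+6\}$.

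Next I would use $f(z_1)=0$. The positive edges at $z_1$ are $z_1z_2$ and the $z_1v_i$, so their induced labels are just the vertex labels. Put $A=\{f(z_2)\}\cup\{f(v_i)\}$; this is a set of $l+1$ distinct values. Since all positive edge labels must lie in $\{1,\dots,l+4\}$, we get $A\subseteq\{3,\dots,l+4\}$. That set has $l+2$ elements, so exactly one value $x\in\{3,\dots,l+4\}$ is missing from $A$. Consequently the $w_j$ receive the labels $\{x,l+5,l+6\}$. The three edges at $z_2$ must produce the remaining positive labels $\{1,\dots,l+4\}\setminus A=\{1,2,x\}$.

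Writing $c=f(z_2)$, with $3\le c\le l+4$ and $c\ne x$, the $w$-edges have labels $|c-x|$, $l+5-c$ and $l+6-c$. The last two are consecutive positive integers, and together with $|c-x|$ they must form $\{1,2,x\}$. The only consecutive pairs inside $\{1,2,x\}$ with $x\ge3$ are $\{1,2\}$ and, when $x=3$, $\{2,3\}$. I would treat these two cases separately.

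\textbf{Case $\{l+5-c,\,l+6-c\}=\{1,2\}$.} Then $c=l+4$, and the remaining requirement is $|c-x|=x$. Since $x<c$, this forces $c=2x$, that is $l+4=2x$, which is impossible for $l$ odd.

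\textbf{Case $x=3$ and $\{l+5-c,\,l+6-c\}=\{2,3\}$.} Then $c=l+3$, and we need $|c-x|=l=1$. This contradicts $l\ge3$.

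Both cases fail, so no additively graceful labeling exists. The only delicate step is the counting that forces $A\subseteq\{3,\dots,l+4\}$ and leaves exactly one omitted value $x$. Once that is established, the case analysis is immediate.
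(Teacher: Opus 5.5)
Your proposal is correct and follows essentially the same route as the paper: fix $f(z_1,u_1,u_2)=(0,1,2)$, observe that the $w_j$ must carry $l+5$ and $l+6$, rule out $f(z_2)<l+4$, and kill $f(z_2)=l+4$ by the parity obstruction $l+4=2f(w_1)$. Your device of the single omitted value $x$ with target set $\{1,2,x\}$ and the consecutive pair $l+5-c,\,l+6-c$ packages the paper's three cases on $f(z_2)$ more compactly. It also makes explicit the bound $f(z_2)\le l+4$, which the paper uses only tacitly.
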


		\begin{proof}
			Let $z_1z_2$ be the positive edge, to which are attached positive edges $z_1v_i$, $i=1,2, \dots, l$ and $z_2w_i$, $i=1,2,3$, as well as negative edges $z_1u_1$ and $z_1u_2$.
			Suppose $f:V(S)\rightarrow\{0,1,\dots,l+6\}$ is an additively graceful labeling of $S$. By \Cref{obs: two negative edges must be adjacent in add grace sigraph} and \wlg, $f(z_1,u_1,u_2)=(0,1,2)$. Now $f$ needs to assign the labels $3,4,\dots,l+6$ to the vertices $w_1,w_2,w_3,z_2,v_1,v_2\dots, v_l$.
			\Wlg, we may assume that, $f(w_2)=l+5$ and $f(w_3)=l+6$, since $m=l+4$ and hence these labels cannot be assigned to vertices adjacent to $z_1$.
			\\\\
			Now if $f(z_2)<l+3$ then neither edge label 1 nor 2 can occur on edges $z_2w_2$ and $z_2w_3$. It follows that atleast one among edge labels 1 or 2 must occur on a positive edge adjacent to $z_1$, which is impossible. Further, if $f(z_2)=l+3$ then, $f(w_1)=l+2 \text{ or } l+4$, as this is the only way to obtain edge label 1 on a positive edge. However, 
			since $l\geq 3$ hence, $l+2$ and $l+4$, both of which need to occur as edge labels on a positive edge, are not equal to $f^*(z_2w_2)$ or $f^*(z_2w_3)$. It follows that one among $l+2$ or $l+4$ cannot be obtained as an edge labels on a positive edge. Therefore $f(z_2)=l+4$, which is odd.
			
			We now need to assign the labels $3,4,\dots,l+3$ to the vertices $w_1,v_1,v_2\dots, v_l$ so as to again obtain edge labels $3,4,\dots,l+3$, on the remaining positive edges. But because $f(z_2)$ is odd hence, for every $x \in \{3,4,\dots,l+3\}$, $x\neq f(z_2)-x$. It follows that if $f(w_1)=x$ then it is impossible to obtain $x$ as an edge label on a positive edge. Hence $f$ cannot be an additively graceful labeling.
		\end{proof}
		
		\begin{thm}
			\label{thm: double star with 2 neg pendants and r=2 is add grace}
			Let $S$ be the $(p,m,2)$ signed graph consisting of a positive edge, to which are attached $l$ positive pendant edges and 2 negative pendant edges at one end as well as $2$ positive pendant edges at the other end.
			Then, $S$ is additively graceful. Further for each $l \in \mathbb{N}\cup\{0\}$, the additively graceful labeling obtained is unique up to rearrangement of labels among the $u_i$'s, $v_i$'s and $w_i$'s.
		\end{thm}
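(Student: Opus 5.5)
The approach is to determine the labeling completely by forcing arguments, in the style of the proof of \Cref{thm: double star with 2 neg pendants and l=odd l>=3 r=3 is not add graceful}; the forced labeling then gives existence and uniqueness together. Let $z_1z_2$ be the positive edge, with positive edges $z_1v_i$ for $i=1,\dots,l$ and $z_2w_1$, $z_2w_2$, and negative edges $z_1u_1$, $z_1u_2$. Here $m=l+3$ and $n=2$.

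\textbf{Step 1 (consequences of earlier results).} By \Cref{thm: add grace tree requires n<=2}, an additively graceful labeling $f$ must be a bijection onto $\{0,1,\dots,l+5\}$. By \Cref{obs: two negative edges must be adjacent in add grace sigraph}, after swapping $u_1,u_2$ if necessary, $f(z_1,u_1,u_2)=(0,1,2)$. So the labels $3,\dots,l+5$ go bijectively to $z_2,w_1,w_2,v_1,\dots,v_l$.

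\textbf{Step 2 (which edges carry labels 1 and 2).} Every positive edge at $z_1$ has label equal to the label of its other endpoint, hence at least $3$. Therefore the positive labels $1$ and $2$ must lie on $z_2w_1$ and $z_2w_2$. Consequently these two edges carry exactly the labels $\{1,2\}$, and $f(w_1),f(w_2)\in\{f(z_2)\pm1,f(z_2)\pm2\}$.

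\textbf{Step 3 (the remaining labels force everything).} The positive edges at $z_1$ must carry exactly the labels $\{3,\dots,l+3\}$. Their labels are precisely the vertex labels of $z_2,v_1,\dots,v_l$, which form the set $\{3,\dots,l+5\}\setminus\{f(w_1),f(w_2)\}$. Equating the two sets gives $\{f(w_1),f(w_2)\}=\{l+4,l+5\}$. For the differences with $f(z_2)$ to be exactly $\{1,2\}$, we need $f(z_2)\in\{l+3,l+6\}$, and $l+6$ lies outside the label range, so $f(z_2)=l+3$. The labels $3,\dots,l+2$ then go to the $v_i$ in some order.

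\textbf{Step 4 (existence and uniqueness).} This yields, up to rearranging labels among the $u_i$'s, $v_i$'s and $w_i$'s, the single candidate
\[
f(z_1,u_1,u_2,z_2,w_1,w_2)=(0,1,2,l+3,l+4,l+5), \qquad f(v_i)=i+2 .
\]
It remains to check that this candidate works, including the case $l=0$. The negative labels are $1,2$, and the positive labels are $\{3,\dots,l+2\}\cup\{l+3\}\cup\{1,2\}$, as required. Hence the labeling exists and is unique in the stated sense.

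I expect no real obstacle. The only delicate point is Step 3: one must argue with set equality, rather than merely counting, to pin down $\{f(w_1),f(w_2)\}$, and then rule out $f(z_2)=l+6$ because it exceeds the label range.
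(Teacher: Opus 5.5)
Your proposal is correct and follows essentially the same forcing argument as the paper: $f(z_1)=0$ and $\{f(u_1),f(u_2)\}=\{1,2\}$; the labels $l+4,l+5$ must go to the $w_i$, which forces $f(z_2)=l+3$ and leaves $3,\dots,l+2$ for the $v_i$. The only difference is how you place $l+4,l+5$: the paper reads it off directly from the bound $m=l+3$ on labels of positive edges at $z_1$, while you use the equivalent set equality, and your verification of the forced candidate (including $l=0$) is the paper's labeling.
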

		
		\begin{proof}
			Let $z_1z_2$ be the positive edge, to which are attached positive edges  $z_2w_1$, $z_2w_2$ and $z_1v_i$, $i=1,2, \dots, l$, as well as negative edges $z_1u_1$ and $z_1u_2$.
			Define $f:V(S)\rightarrow\{0,1,\dots,l+5\}$ as follows. $f(z_1,u_1,u_2)=(0,1,2)$
			\begin{equation*}
				\label{eqn: double star pos edge in middle and r=n}
				f(x)= \begin{cases}
					
					i+2 & \text{for } x=v_i~~~i=1,2,\dots,l\\
					l+3 & \text{for } x=z_2 \\
					l+4 & \text{for } x=w_1 \\
					l+5 & \text{for } x=w_2
				\end{cases}
			\end{equation*}
			\Everify, $f(V(S))=\{0,1,\dots,l+5\}$ and $f$ is an additively graceful labeling of $S$.
			\\\\
			Further, let $g$ be any additively graceful labeling of $S$. \Cref{obs: two negative edges must be adjacent in add grace sigraph}, implies that $g(z_1)=0$ and $g$ assigns labels $1$ and $2$ to vertices $u_1$ and $u_2$.
			Now, $g$ is forced to assign labels $l+4$ and $l+5$ to vertices $w_2$ and $w_3$, since, $m=l+3$ and hence these labels cannot be assigned to vertices adjacent to $z_1$.
			Consequently, the only way to obtain edge labels 1 and 2 on positive edges is to set $g(z_2)=l+3$. The remaining labels $3,4,\dots,l+2$ can only be assigned to vertices $v_i$, $i=1,2,\dots,l$. Hence $g=f$, up to rearrangement of labels among the $u_i$'s, $v_i$'s and $w_i$'s.
		\end{proof}
		
			\begin{thm}
			\label{thm: double star with 2 neg pendants and l=0 r>2 is not grace elegant} Let $S$ be the $(p,m,2)$ signed graph consisting of a positive edge, to which are attached 2 negative pendant edges at one end and $r$ positive pendant edges at the other end.
			If $r>2$ then, $S$ is not additively graceful.
		\end{thm}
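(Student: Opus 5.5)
Set $l=0$ in the common setting of this section: $z_1z_2$ is the positive edge, $z_1u_1$ and $z_1u_2$ are the negative edges, and $z_2w_i$, $i=1,2,\dots,r$, are the positive pendant edges. Then $m=r+1$ and $n=2$. The plan is to assume an additively graceful labeling $f$ exists and derive a contradiction. By \Cref{thm: add grace tree requires n<=2}, $f$ is a bijection onto $\{0,1,\dots,r+3\}$. By \Cref{obs: two negative edges must be adjacent in add grace sigraph}, and \wlg, $f(z_1,u_1,u_2)=(0,1,2)$.

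It follows that $z_2$ and the $w_i$'s receive exactly the labels $\{3,4,\dots,r+3\}$, and $f^*(z_1z_2)=f(z_2)$. The positive edge labels must be $\{1,\dots,r+1\}$. Write $a=f(z_2)\in\{3,\dots,r+3\}$.

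\begin{itemize}
\item Since $f(z_2)$ is itself a positive edge label, $a\le r+1$.
\item The remaining positive edge labels are the values $|x-a|$ for $x\in\{3,\dots,r+3\}\setminus\{a\}$. These must be exactly $\{1,\dots,r+1\}\setminus\{a\}$.
\item The differences $|x-a|$ take each value in $\{1,\dots,\min(a-3,\,r+3-a)\}$ twice, once from each side of $a$. Injectivity therefore forces $\min(a-3,r+3-a)=0$, so $a=3$ or $a=r+3$. The second option is excluded by the first point, so $a=3$.
\item With $a=3$, the $w$-edges receive labels $\{1,2,\dots,r\}$. Together with $f^*(z_1z_2)=3$, the label $3$ appears twice whenever $r\ge3$. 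Moreover, $r+1$ is missing. This contradicts injectivity.
\end{itemize}

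Hence no such $f$ exists when $r>2$.

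For $r=2$ the argument fails, as it must: $a=3$ gives $w$-labels $\{1,2\}$ and $z_1z_2$ label $3=r+1$, consistent with \Cref{thm: double star with 2 neg pendants and r=2 is add grace}. The step needing most care is the two-sided collision count, since it pins $a$ down. A simpler route I would also check is the following: the edge label $r+1$ can only arise as $f(z_2)=r+1$ or as $|x-a|=r+1$ with $\{x,a\}=\{3,r+4\}$, which is out of range. So $a=r+1$. Then, for $r>2$, both $r$ and $r+2$ are available labels on opposite sides of $a$ and both give difference $1$, a contradiction.
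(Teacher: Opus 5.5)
Your main argument is correct and is essentially the paper's proof. Both fix $f(z_1)=0$, bound $f(z_2)\le r+1$, rule out $f(z_2)=3$ because a $w_i$ labelled $6$ duplicates the edge label $3$, and otherwise obtain the edge label $1$ twice from $f(z_2)\pm1$; you only reorder these steps. Your alternative route is also correct and slightly slicker: $r+1$ exceeds the largest difference $r$ available within $\{3,\dots,r+3\}$, which forces $f(z_2)=r+1$ directly and avoids the $f(z_2)=3$ case.
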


		\begin{proof}
			Let $z_1z_2$ be the positive edge, to which are attached positive edges  $z_2w_i$, $i=1,2, \dots, r$, as well as negative edges $z_1u_1$ and $z_1u_2$.
			Suppose $f:V(S)\rightarrow\{0,1,\dots,r+3\}$ is an additively graceful labeling of $S$. By \Cref{obs: two negative edges must be adjacent in add grace sigraph} and \wlg, $f(z_1,u_1,u_2)=(0,1,2)$. Now $f$ needs to assign the labels $3,4,\dots,r+3$ to the vertices $z_2,w_1,w_2,\dots, w_r$. Since $f^*(z_1z_2)$ cannot exceed $r+1$ hence,
			\begin{equation*}
				f(z_2)\leq r+1 
			\end{equation*}
			Since $r>2$ hence, $r+3\geq 6$, which forces $f(w_i)=6$ for some $i$. This implies that $f(z_2)\neq 3$, since otherwise we obtain the contradiction $f^*(z_1z_2)=3=f^*(z_2w_i)$. Therefore,
			\[
			3<f(z_2)<r+3
			\]
			It follows that, $f(z_2)-1$ and $f(z_2)+1$ are assigned to some vertices $w_i$ and $w_j$. This in turn leads to the  contradiction $f^*(z_2w_i)=f^*(z_2w_j)=1$.
		\end{proof}
		
			\begin{thm}
			\label{thm: double star with 2 neg pendants and l=2<=r is grace elegant}
			Let $S$ be the $(p,m,2)$ signed graph consisting of a positive edge, to which are attached $2$ positive pendant edges and 2 negative pendant edges at one end as well as $r$ positive pendant edges at the other end.
			Then, for every $r\geq2$, $S$ is additively graceful.
		\end{thm}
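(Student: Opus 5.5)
We prove the statement by an explicit construction. As in the proofs of \Cref{thm: double star with 2 neg pendants and r=2 is add grace} and \Cref{thm: double star with 2 neg pendants and l=r-2 is grace elegant}, let $z_1z_2$ be the positive edge. Attached to it are the negative edges $z_1u_1, z_1u_2$, the positive edges $z_1v_1, z_1v_2$, and the positive edges $z_2w_i$, $i=1,\dots,r$. Here $m=r+3$ and $n=2$, so by \Cref{thm: add grace tree requires n<=2} the labeling must be a bijection onto $\{0,1,\dots,r+5\}$. By \Cref{obs: two negative edges must be adjacent in add grace sigraph} we must take $f(z_1,u_1,u_2)=(0,1,2)$, which produces the negative edge labels $1,2$.

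The positive edges at $z_1$ simply carry the labels $f(v_1)$, $f(v_2)$ and $f(z_2)$. The idea is to spend the three largest required positive labels there, setting $f(v_1,v_2,z_2)=(r+1,r+2,r+3)$. The remaining positive labels $1,\dots,r$ must then appear as $|f(w_i)-(r+3)|$. The unused vertex labels are $3,4,\dots,r$ together with $r+4,r+5$.

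We therefore set $f(w_{r-1})=r+4$ and $f(w_r)=r+5$, giving edge labels $1$ and $2$. For $i=1,\dots,r-2$ we set $f(w_i)=i+2$, giving edge labels $r+3-(i+2)=r+1-i$, which run through $r,r-1,\dots,3$. When $r=2$ the last family is empty, and the labeling coincides with the one in \Cref{thm: double star with 2 neg pendants and r=2 is add grace} for $l=2$.

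The verification is routine. The vertex labels are $0,1,2$ (on $z_1,u_1,u_2$), then $3,\dots,r$ (on $w_1,\dots,w_{r-2}$), then $r+1,r+2,r+3$ (on $v_1,v_2,z_2$), then $r+4,r+5$ (on $w_{r-1},w_r$). This is exactly $\{0,\dots,r+5\}$, each label used once. The positive edge labels are $\{3,\dots,r\}\cup\{1,2\}\cup\{r+1,r+2,r+3\}=\{1,\dots,r+3\}$, and the negative edge labels are $\{1,2\}$.

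The main obstacle was finding the right placement of $f(z_2)$. Taking it small, e.g. $f(z_2)=5$ as in the $r=2$ case, forces the $w_i$ to produce contiguous differences that collide with the labels already used at $z_1$. Placing $f(z_2)=r+3$ with the $w_i$ split on both sides, two above and the rest below, avoids these collisions.
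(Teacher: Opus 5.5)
Your proposal is correct and uses exactly the paper's construction: $f(z_1,u_1,u_2)=(0,1,2)$, $f(v_1,v_2,z_2)=(r+1,r+2,r+3)$, $f(w_i)=i+2$ for $i\le r-2$, and $f(w_{r-1},w_r)=(r+4,r+5)$. Your verification of the vertex labels and of the positive and negative edge labels is also correct, as is your check that the case $r=2$ reduces to the labeling of \Cref{thm: double star with 2 neg pendants and r=2 is add grace} with $l=2$.
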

		
		\begin{proof}
			Let $z_1z_2$ be the positive edge, to which are attached positive edges  $z_1v_1$, $z_1v_2$ and $z_2w_i$, $i=1,2, \dots, r$, as well as negative edges $z_1u_1$ and $z_1u_2$.
			\\\\
			Define $f:V(S)\rightarrow\{0,1,\dots,r+5\}$ as follows. $f(z_1,u_1,u_2)=(0,1,2)$
			\begin{equation*}
				\label{eqn: double star pos edge in middle and r=n}
				f(x)= \begin{cases}
					
					r+1 & \text{for } x=v_{1} \\
					r+2 & \text{for } x=v_{2}\\	
					r+3 & \text{for } x=z_2 \\
					i+2 & \text{for } x=w_i~~~i=1,2,\dots,r-2\\
					r+4 & \text{for } x=w_{r-1} \\
					r+5 & \text{for } x=w_{r}
				\end{cases}
			\end{equation*}
			\Everify, $f(V(S))=\{0,1,\dots,r+5\}$ and $f$ is an additively graceful labeling of $S$.
		\end{proof}
		
		\begin{thm}
			\label{thm: double star with 2 neg pendants with r=even and r l>=2 is grace elegant}
			Let $S$ be the $(p,m,2)$ signed graph consisting of a positive edge, to which are attached $l$ positive pendant edges and 2 negative pendant edges at one end as well as $r$ positive pendant edges at the other end.
			If $l,r\geq2$ and $r$ is even then, $S$ is additively graceful.
		\end{thm}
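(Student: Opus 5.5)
The plan is to give an explicit labeling, as in the preceding existence theorems, and then verify it. Let $z_1z_2$ be the positive edge, with positive edges $z_1v_i$ ($i=1,\dots,l$), positive edges $z_2w_j$ ($j=1,\dots,r$) and negative edges $z_1u_1,z_1u_2$. Here $m=l+r+1$ and $n=2$, so by \Cref{thm: add grace tree requires n<=2} the labeling $f$ must be a bijection onto $\{0,1,\dots,l+r+3\}$. As in \Cref{thm: double star with 2 neg pendants and r=2 is add grace}, I would set $f(z_1,u_1,u_2)=(0,1,2)$, which produces the negative edge labels $1,2$. Since $f(z_1)=0$, every positive edge at $z_1$ (including $z_1z_2$) receives as its label the label of its other endpoint.

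Structural idea: put $c:=f(z_2)=l+r+1$ and $f(w_{r-1},w_r)=(l+r+2,\,l+r+3)$, so that the edges $z_2w_{r-1},z_2w_r$ get the labels $1,2$ and $z_1z_2$ gets the label $c$. The remaining $r-2$ vertices $w_1,\dots,w_{r-2}$ receive a set $B\subset\{3,\dots,l+r\}$. The $v_i$'s receive the complementary set $A=\{3,\dots,l+r\}\setminus B$, which has exactly $l$ elements. The positive edge labels are then
\[
\{1,2\}\cup\{c\}\cup A\cup\{c-x : x\in B\}.
\]
This equals $\{1,\dots,l+r+1\}$ precisely when $\{c-x:x\in B\}=B$, i.e.\ when $B$ is invariant under the reflection $x\mapsto c-x$. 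This is where the hypothesis that $r$ is even enters: $r-2$ is even, so $B$ can be taken as a union of $\frac{r}{2}-1$ reflected pairs.

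Concretely, take
\[
B=\{3,4,\dots,\tfrac r2+1\}\cup\{c-3,c-4,\dots,c-\tfrac r2-1\},
\]
that is, $f(w_{2k-1})=k+2$ and $f(w_{2k})=c-k-2$ for $k=1,\dots,\frac r2-1$. Then list the remaining labels of $\{3,\dots,l+r\}$ in increasing order on $v_1,\dots,v_l$.

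The main thing to check is that the two halves of $B$ are disjoint and lie in $[3,l+r]$. This requires $\frac r2+1<c-\frac r2-1$, i.e.\ $r+2<l+r+1$, which holds because $l\ge 2$. Note that the check involves only strict inequality between the two halves, so no case split on the parity of $c$ is needed, since the midpoint $c/2$ is never used. I would also confirm that $c-3\le l+r$, which is immediate. Once these are in place, injectivity and surjectivity onto $\{0,\dots,l+r+3\}$, and the computation of the edge-label sets, are routine verifications.
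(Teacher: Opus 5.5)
Your proposal is correct and is essentially the paper's proof. With $r=2t$, your choices $f(z_2)=l+r+1$, $f(w_{r-1},w_r)=(l+r+2,l+r+3)$ and $B=\{3,\dots,t+1\}\cup\{l+t,\dots,l+2t-2\}$ reproduce the paper's labeling exactly, up to the order of the $w_j$'s and $v_i$'s. Your reflection-invariance framing adds two things the paper leaves implicit: it shows where the evenness of $r$ is used, and it covers $r=2$ directly (with $B=\emptyset$) instead of citing the earlier theorem.
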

		
		\begin{proof}
			Let $z_1z_2$ be the positive edge, to which are attached positive edges $z_1v_i$, $i=1,2, \dots, l$ and $z_2w_i$, $i=1,2, \dots, r$, as well as negative edges $z_1u_1$ and $z_1u_2$.
			Since \Cref{thm: double star with 2 neg pendants and r=2 is add grace} proves the result for $r=2$ hence, we may assume $r\geq4$. Also since $r$ is even hence, $r=2t$ for some $t \in \mathbb{N}$.
			\\\\			
			Define $f:V(S)\rightarrow\{0,1,\dots,2t+l+3\}$ as follows. $f(z_1,u_1,u_2)=(0,1,2)$
			
			\begin{equation*}
				\label{eqn: double star with 2 neg pendants with r=even and r,l>=2 is grace elegant}
				f(x)= \begin{cases}
					
					t+1+i & \text{for } x=v_i~~~i=1,2,\dots,l-2 \\
					2t+i & \text{for } x=v_i~~~i=l-1,~l\\	
					2t+l+1 & \text{for } x=z_2 \\
					2+i & \text{for } x=w_i~~~i=1,2,\dots,t-1\\
					l+i & \text{for } x=w_i~~~i=t,t+1,\dots,2t-2 \\
					l+3+i & \text{for } x=w_i~~~i=2t-1,~2t
				\end{cases}
			\end{equation*}
			\Everify, $f(V(S))=\{0,1,\dots,2t+l+3\}$ and $f$ is an additively graceful labeling of $S$.
		\end{proof}
		
			\begin{thm}
			\label{thm: double star with 2 neg pendants and l=1 r>4 is not grace elegant}
			Let $S$ be the $(p,m,2)$ signed graph consisting of a positive edge, to which are attached $2$ negative pendant edges and 1 positive pendant edge at one end as well as $r$ positive pendant edges at the other end.
			If $r\geq5$ then, $S$ is not additively graceful.
		\end{thm}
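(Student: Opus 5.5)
We would argue by contradiction, pinning down all labels step by step in the style of the proof of \Cref{thm: double star with 2 neg pendants and l=0 r>2 is not grace elegant}. Let $z_1z_2$ be the positive edge, with negative edges $z_1u_1,z_1u_2$, positive edge $z_1v_1$, and positive edges $z_2w_i$, $i=1,\dots,r$. Here $m=r+2$, so by \Cref{thm: add grace tree requires n<=2} any additively graceful labeling $f$ is a bijection onto $\{0,1,\dots,r+4\}$. By \Cref{obs: two negative edges must be adjacent in add grace sigraph} we may take $f(z_1,u_1,u_2)=(0,1,2)$.

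Write $a=f(z_2)$ and $b=f(v_1)$. Since $z_1$ is labeled $0$, the edges $z_1z_2$ and $z_1v_1$ receive labels $a$ and $b$. As these are positive edge labels, $3\le a,b\le r+2$. The set $T=\{3,\dots,r+4\}\setminus\{a,b\}$ of labels on the $w_i$'s must then satisfy
\[
\{|x-a|:x\in T\}=\{1,\dots,r+2\}\setminus\{a,b\},
\]
and in particular the distances $|x-a|$, $x\in T$, must be pairwise distinct.

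The key step is a pairing argument. For every $d\ge1$ with both $a-d$ and $a+d$ in $[3,r+4]$, at most one of these two labels can lie in $T$. The only label in $[3,r+4]\setminus\{a\}$ missing from $T$ is $b$, so at most one such pair can exist. This forces $\min(a-3,\,r+4-a)\le1$. Since $a\le r+2$, the second option is impossible, so $a\in\{3,4\}$, and if $a=4$ then $b\in\{3,5\}$.

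It remains to dispose of three small cases by direct computation of the distance sets, where the duplicate is forced by $r\ge5$.
\begin{itemize}
\item If $a=4$ and $b=3$, the distances from $T$ are exactly $\{1,\dots,r\}$. This set contains $4=a$, a repeated edge label.
\item If $a=4$ and $b=5$, then $T=\{3\}\cup\{6,\dots,r+4\}$ and the distances are again $\{1,\dots,r\}\ni 4$, the same contradiction.
\item If $a=3$, then $T=\{4,\dots,r+4\}\setminus\{b\}$ and the distances are $\{1,\dots,r+1\}\setminus\{b-3\}$. The label $r+2$ must therefore equal $b$. But then $6\in T$ (as $r+2\ne6$ when $r\ge5$), giving distance $3=a$, a duplicate.
\end{itemize}

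I expect the main obstacle to be making the pairing argument clean, i.e.\ correctly excluding the boundary cases and checking where $r\ge5$ is genuinely needed. The rest is routine verification.
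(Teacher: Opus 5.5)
Your proof is correct and follows essentially the paper's route. You fix $f(z_1,u_1,u_2)=(0,1,2)$, rule out $f(z_2)\ge 5$ because the pairs $f(z_2)\pm1$ and $f(z_2)\pm2$ cannot both be broken by the single label $f(v_1)$, and dispose of $f(z_2)\in\{3,4\}$ by hand. The differences are cosmetic: you state the pairing for all $d$ at once, and for $f(z_2)=3$ you force $f(v_1)=r+2$ (so $6$ duplicates $3$), whereas the paper forces $f(v_1)=6$ (so $9$ duplicates $6$).
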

		
		\begin{proof}
			Let $z_1z_2$ be the positive edge, to which are attached positive edges $z_1v_1$ and  $z_2w_i$, $i=1,2, \dots, r$, as well as negative edges $z_1u_1$ and $z_1u_2$.
			Suppose $f:V(S)\rightarrow\{0,1,\dots,r+4\}$ is an additively graceful labeling of $S$. By \Cref{obs: two negative edges must be adjacent in add grace sigraph} and \wlg, $f(z_1,u_1,u_2)=(0,1,2)$. Now, $f$ needs to assign the labels $3,4,\dots,r+4$ to the vertices $v_1,z_2,w_1,w_2,\dots, w_r$.
			Since $r\geq 5$ hence, $r+4\geq9$.
			\\\\
			If $f(z_2)=3$ then to avoid duplication of edge label 3 we must set $f(v_1)=6$. This in turn ensures that $f(w_i)=9$ for some $i$, causing the duplication of edge label 6. Therefore, $f(z_2)\neq3$
			\\\\
			If $f(z_2)=4$ then to avoid duplication of edge label 4 we must set $f(v_1)=8$. This in turn ensures that $f(w_i,w_j)=(3,5)$ for some $i,j$, causing the duplication of edge label 1. Hence $f(z_2)\neq4$
			\\\\
			Further, since $f^*(z_1z_2)=f(z_2)$ and $m=r+2$ hence, $f(z_2)\leq r+2$. Therefore, we have,
			\[
			5 \leq f(z_2) \leq r+2
			\]
			It follows that, for any value of $f(z)$, the vertex labels assigned to vertices $v_1,w_1,w_2,\dots, w_r$ includes  $f(z)-1$, $f(z)-2$, $f(z)+1$ and $f(z)+2$, only one among which can be $f(v_1)$. This implies that either 1 or 2 will occur twice as labels on edges $z_2w_i$ and $z_2w_j$ for some $i,j$. This gives a contradiction.
		\end{proof}
		
		\begin{figure}[h]
			\centering
			\includegraphics[width=1\linewidth]{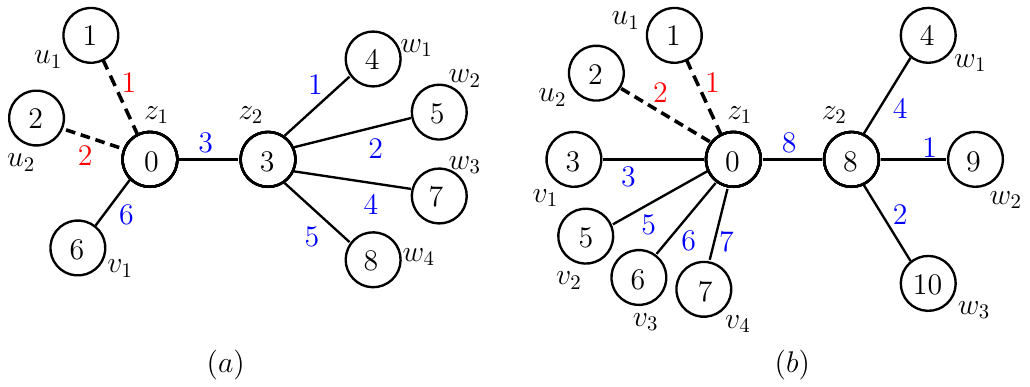}
			\caption{Additively graceful labeling of signed double stars when $n=2$ and the non-pendant edge is positive.}
			\label{fig:fig2015used}
		\end{figure}
		
		\begin{rmk}
			\label{rmk:1}
			The labeling illustrated in \Cref{fig:fig2015used}(a) shows that \Cref{thm: double star with 2 neg pendants and l=1 r>4 is not grace elegant} does not hold if $r=4$. 
		\end{rmk}
		
		\begin{thm}
			\label{thm: double star with 2 neg pendants with l=even and r l>=2 is grace elegant}
			Let $S$ be the $(p,m,2)$ signed graph consisting of a positive edge, to which are attached $l$ positive pendant edges and 2 negative pendant edges at one end as well as $r$ positive pendant edges at the other end.
			If $l,r\geq2$ and $l$ is even then, $S$ is additively graceful. Further, if $r=2$ or $3$ then, for each $l\geq 2$, $l \equiv0 \pmod2$, the additively graceful labeling obtained is unique up to rearrangement of labels among the $u_i$'s, $v_i$'s and $w_i$'s. 
		\end{thm}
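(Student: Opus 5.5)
We give an explicit construction that uses a single free choice, and then force the structure when $r=2,3$. Let $z_1z_2$ be the positive edge, with positive pendants $z_1v_i$ ($i=1,\dots,l$), $z_2w_i$ ($i=1,\dots,r$), and negative pendants $z_1u_1,z_1u_2$. Here $m=l+r+1$, and by \Cref{thm: add grace tree requires n<=2} every labeling uses exactly $\{0,1,\dots,l+r+3\}$. As in earlier proofs we set $f(z_1,u_1,u_2)=(0,1,2)$. Put $c=l+r+1$ and $f(z_2)=c$, so that $f^*(z_1z_2)=c=m$. Also set $f(w_{r-1},w_r)=(c+1,c+2)$, which produces the positive edge labels $1$ and $2$.

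The remaining vertex labels are $\{3,\dots,c-1\}$, to be split as $V\cup W'$ with $|V|=l$ and $|W'|=r-2$. A vertex $v_i$ labeled $x$ gives edge label $x$, and a vertex $w_j$ labeled $x<c$ gives edge label $c-x$. The edge labels $3,\dots,c-1$ therefore appear exactly once precisely when
\[
V\cup(c-W')=\{3,\dots,c-1\}=V\cup W',
\]
that is, when $c-W'=W'$. So it suffices to choose a subset $W'\subseteq\{3,\dots,c-3\}$ of size $r-2$ that is invariant under $x\mapsto c-x$.

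Parity is where the hypothesis that $l$ is even enters. If $r$ is even, then $c$ is odd, there is no fixed point, and we take $(r-2)/2$ pairs $\{x,c-x\}$ with $3\le x<c/2$. If $r$ is odd, then $c$ is even, and we take the fixed point $c/2$ together with $(r-3)/2$ such pairs. Room is sufficient because $\{3,\dots,c-3\}$ has $l+r-4\ge r-2$ elements, using $l\ge2$. A concrete choice is $W'=\{3,\dots,\lfloor (r+1)/2\rfloor\}\cup\{c-3,\dots\}$, with $c/2$ added when needed, and $V$ its complement in $\{3,\dots,c-1\}$. Checking that $f$ is additively graceful is then routine.

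For uniqueness with $r=2$ or $3$, let $g$ be any additively graceful labeling. By \Cref{obs: two negative edges must be adjacent in add grace sigraph}, we may take $g(z_1,u_1,u_2)=(0,1,2)$. The labels $c+1$ and $c+2$ exceed $m$, so they cannot sit on neighbours of $z_1$ and must go to two of the $w_j$. Since $g(z_2)=g^*(z_1z_2)\le m=c$, the positive edge labels $1$ and $2$ must come from edges at $z_2$.

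When $r=2$, the only $w$'s are those labeled $c+1$ and $c+2$, so differences $1$ and $2$ force $g(z_2)=c$. This leaves $W'=\emptyset$, and the $v_i$ receive $3,\dots,c-1$.

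When $r=3$, I expect the main obstacle. I would mimic the proof of \Cref{thm: double star with 2 neg pendants and l=odd l>=3 r=3 is not add graceful}. If $g(z_2)<c-1$, the two top $w$'s cannot produce label $1$ or $2$, while the third $w$ produces only one of them; this is a contradiction. If $g(z_2)=c-1$, then $g(w_1)\in\{c-2,c\}$ is needed for label $1$. The large labels $c-1=l+3$ and $c=l+4$, which differ from $3$ and $2$ since $l\ge2$, must then each be obtained. I would check that one of them is necessarily missing or duplicated, and I need to verify that carefully, including small $l$ such as $l=2$. Hence $g(z_2)=c$. Then $c-W'=W'$ with $|W'|=1$ forces $g(w_1)=c/2$, and all remaining labels go to the $v_i$, giving uniqueness up to rearrangement.
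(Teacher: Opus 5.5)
Your existence argument is the paper's construction in a cleaner form. The paper also sets $f(z_2)=l+r+1=m$ and puts the two largest labels on $w_{r-1},w_r$. Its set $\{t+2,\dots,t+r-1\}$ (with $l=2t$) is one particular $W'$ invariant under $x\mapsto c-x$. Your condition $c-W'=W'$ shows exactly where the parity of $l$ enters, and it covers $l=2$ directly, whereas the paper defers $l=2$ to the earlier theorem for $l=2\le r$.

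One slip: your explicit $W'$ is off by one for even $r\ge4$. For $r=4$, $\{3,\dots,\lfloor 5/2\rfloor\}$ is empty, so $|W'|=0\neq 2$; the lower block should end at $\lfloor r/2\rfloor+1$. This is harmless, because your pair count already proves existence. Your uniqueness arguments for $r=2$, and for $r=3$ with $g(z_2)\le c-2$ or $g(z_2)=c$, are correct and match the paper.

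The gap is the case $r=3$, $g(z_2)=c-1$. You leave it open, and the check you propose would not close it. Here $g^*(z_2w_2,z_2w_3)=(2,3)$, and $c-1=l+3$ is always realized by $z_1z_2$. If $g(w_1)=c-2$, then vertex label $c=l+4$ must go to some $v_i$, so $c$ is realized as well. In that subcase neither $c-1$ nor $c$ is missing or duplicated.

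The label to watch is $g(w_1)$ itself. Whichever of $l+2$ and $l+4$ sits on $w_1$ is on no $v_i$ and differs from $g(z_2)=l+3$, while the edges at $z_2$ carry only $1,2,3$. Since $l\ge2$ gives $l+2\ge4$, that label appears on no positive edge, which is a contradiction. Equivalently, edge label $3$ is duplicated: once on $z_2w_3$, and once on the edge $z_1v_i$ with $g(v_i)=3$. Vertex label $3$ must sit on a $v_i$, because it cannot be $g(w_1)\in\{l+2,l+4\}$.

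This is exactly where $l\ge2$ is needed. For $l=1$ the case survives: $g(z_1,u_1,u_2)=(0,1,2)$ together with $g(z_2,w_1,w_2,w_3,v_1)=(4,3,6,7,5)$ is an additively graceful labeling.
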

		
		\begin{proof}
			Let $z_1z_2$ be the positive edge, to which are attached positive edges $z_1v_i$, $i=1,2, \dots, l$ and $z_2w_i$, $i=1,2, \dots, r$, as well as negative edges $z_1u_1$ and $z_1u_2$.
			Since \Cref{thm: double star with 2 neg pendants and l=2<=r is grace elegant} proves the result for $l=2$, hence, we may assume that $l\geq4$. Also since $l$ is even hence, $l=2t$ for some $t \in \mathbb{N}$. 
			\\\\		
			Define $f:V(S)\rightarrow\{0,1,\dots,2t+r+3\}$ as follows. $f(z_1,u_1,u_2)=(0,1,2)$
			\begin{equation*}
				\label{eqn: double star with 2 neg pendants with l=even and r,l>=2 is grace elegant}
				f(x)= \begin{cases}
					
					2+i & \text{for } x=v_i~~~i=1,2,\dots,t-1 \\
					r+i & \text{for } x=v_i~~~i=t,t+1,\dots,2t\\	
					2t+r+1 & \text{for } x=z_2 \\
					t+1+i & \text{for } x=w_i~~~i=1,2,\dots,r-2\\
					2t+3+i & \text{for } x=w_i~~~i=r-1,~r
				\end{cases}
			\end{equation*}
			\Everify, $f(V(S))=\{0,1,\dots,2t+r+3\}$ and $f$ is an additively graceful labeling of $S$.
			\\\\
			Further, suppose $l\geq 2$, $l \equiv0 \pmod2$ and $r=2$ or $3$.
			\\\\		
			If $r=2$. By \Cref{thm: double star with 2 neg pendants and r=2 is add grace}, the additively graceful labeling obtained is unique up to rearrangement of labels among the $u_i$'s, $v_i$'s and $w_i$'s.
			\\\\
			If $r=3$. Let $g$ be any additively graceful labeling of $S$. \Cref{obs: two negative edges must be adjacent in add grace sigraph} implies that, $g(z_1)=0$ and $g$ assigns labels $1$ and $2$ to vertices $u_1$ and $u_2$. \Wlg, we may assume that, $g(w_2)=l+5$ and $g(w_3)=l+6$, since $m=l+4$ and hence these labels cannot be assigned to vertices adjacent to $z_1$.
			\\\\
			Now if $f(z_2)<l+3$ then neither edge label 1 nor 2 can occur on edge $z_2w_2$ or $z_2w_3$. It follows that atleast one among edge labels 1 or 2 must occur on a positive edge adjacent to $z_1$, which is impossible. Further, if $g(z_2)=l+3$ then, $g(w_1)=l+2 \text{ or } l+4$, as this is the only way to obtain edge label 1 on a positive edge. However, 
			since $l\geq 2$ hence, $l+2$ and $l+4$, both of which need to occur as edge labels on a positive edge, are not equal to $g^*(z_2w_2)$ or $g^*(z_2w_3)$. It follows that one among $l+2$ or $l+4$ cannot be obtained as edge labels on a positive edge. Therefore,  $g(z_2)=l+4$.
			\\\\
			The remaining labels $3,4,\dots,l+3$ need to be assigned to vertices $w_1$, $v_1,v_2\dots,v_l$, so as to again obtain edge labels $3,4,\dots,l+3$. The $l$ labels assigned to each $v_i$ will generate themselves as edge labels on edge $z_1v_i$. Therefore, $g(w_1)$ needs to satisfy
			\[
			g(w_1)=g^*(z_2w_1)
			\]
			This equation has a unique solution $g(w_1)=\frac{l}{2}+2$. The remaining $l$ labels get assigned to $v_i$'s.
			Therefore, $g=f$, up to rearrangement of labels among the $u_i$'s, $v_i$'s and $w_i$'s.
		\end{proof}
				
		\begin{thm}
			\label{thm: add gracefulness of double star with one neg edge in middle and one neg pendant}
			Let $S$ be the $(p,m,2)$ signed graph consisting of a negative edge, to which are attached $l$ positive pendant edges and a negative pendant edge at one end and $r$ positive pendant edges at the other end. Then, $S$ is additively graceful if and only if $l=0$.
		\end{thm}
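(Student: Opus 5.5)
The plan is to use \Cref{obs: two negative edges must be adjacent in add grace sigraph} to pin down the labels near the negative edges, and then finish with a counting (sum) argument rather than a step-by-step forcing.

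\textbf{Setup.} Let $z_1z_2$ be the negative edge and $z_1u$ the negative pendant edge. Let $z_1v_i$, $i=1,\dots,l$, and $z_2w_i$, $i=1,\dots,r$, be the positive pendant edges. Then $m=l+r$ and $n=2$.

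\textbf{Necessity.} Suppose $f$ is an additively graceful labeling. By \Cref{thm: add grace tree requires n<=2}, $f$ is a bijection onto $\{0,1,\dots,m+2\}$. By \Cref{obs: two negative edges must be adjacent in add grace sigraph}, both negative edges meet the vertex labeled $0$, and they share only $z_1$, so $f(z_1)=0$ and $\{f(z_2),f(u)\}=\{1,2\}$. Next we fix which of $z_2,u$ gets $1$. The positive edge label $1$ cannot occur on an edge $z_1v_i$, since $f^*(z_1v_i)=f(v_i)\ge 3$. So it must occur on some $z_2w_j$, which means $f(w_j)=f(z_2)\pm1$. If $f(z_2)=1$, both candidates $0$ and $2$ are already used, a contradiction. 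Hence $f(z_2)=2$ and $f(u)=1$.

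\textbf{The sum argument.} Now the labels $\{3,\dots,m+2\}$ are split into the set $V$ of labels on the $v_i$'s and the set $W$ of labels on the $w_i$'s. The positive edge labels are exactly $V\cup\{y-2: y\in W\}$, and these must be $\{1,\dots,m\}$, each occurring once. Summing both descriptions gives
\[
\sum V+\sum W-2r=\frac{m(m+1)}{2},
\]
while $V\cup W=\{3,\dots,m+2\}$ gives
\[
\sum V+\sum W=\frac{m(m+1)}{2}+2m.
\]
Subtracting the first from the second yields $2m=2r$, hence $m=r$ and $l=0$. This is the step I expect to be the crux. The forcing approach used in \Cref{thm: double star with exactly one neg edge in middle is add grace} also works, but the sum identity avoids tracking cases.

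\textbf{Sufficiency.} If $l=0$, define $f(z_1,u,z_2)=(0,1,2)$ and $f(w_i)=i+2$ for $i=1,\dots,r$. The negative edges $z_1u$ and $z_1z_2$ receive labels $1$ and $2$. The positive edges $z_2w_i$ receive labels $i$ for $i=1,\dots,r$. The vertex labels are exactly $\{0,\dots,r+2\}=\{0,\dots,m+\ceil{\frac{n+1}{2}}\}$. So $f$ is additively graceful, which includes the degenerate case $r=0$, the all negative $P_3$ handled in \Cref{thm: stars are additively graceful iff n=1}.
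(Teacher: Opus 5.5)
Your proof is correct, and your necessity argument takes a different route from the paper's. The paper fixes $f(z_1)=0$, $f(u)=1$, $f(z_2)=2$ and then forces labels one at a time. Positive edge label $1$ forces some $w_i$ to get $3$, label $2$ forces another to get $4$, and so on. Once the $w_i$ carry $3,\dots,r+2$, the label $r+1\le m$ (using $l\ge 1$) cannot appear. The paper also treats $r=0$ separately by appealing to \Cref{thm: stars are additively graceful iff n=1}.

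You replace this forcing with a single global identity. You compute $\sum V+\sum W$ once from the vertex labels $\{3,\dots,m+2\}$ and once from the positive edge labels $\{1,\dots,m\}$. This gives $m=r$ in one line and covers $r=0$ without a case split. The forcing argument gives slightly more structure: run with $l=0$, it shows the $w_i$ labels are forced to be $3,\dots,r+2$. Your argument is shorter and case-free.

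Your identity also makes the preliminary step $f(z_2)=2$ unnecessary. With $a=f(z_2)\in\{1,2\}$, the same computation gives $ar=2m=2l+2r$. This forces $l=0$ in both cases, and $l=r=0$ when $a=1$. Using it this way also absorbs the trivial case $m=0$, where your sentence that positive edge label $1$ must occur does not literally apply.
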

		
		\begin{proof}
			Let $z_1z_2$ be the negative edge, to which are attached positive edges $z_1v_i$, $i=1,2, \dots, l$ and $z_2w_i$, $i=1,2, \dots, r$, as well as a negative edge $z_1u$.
			\\\\
			First, suppose $l\geq1$.
			
			If $r=0$ then, $S$	is a star with two negative edges, which by \Cref{thm: stars are additively graceful iff n=1}, is not additively graceful.
			
			If $r\geq1$ then, let $f:V(S) \rightarrow \{0,1,\dots,l+r+2\}$ be an additively graceful labeling of $S$. By \Cref{obs: two negative edges must be adjacent in add grace sigraph}, $f(z_1)=0$ while vertices $u$ and $z_2$ must be assigned labels 1 and 2. If $f(z_2)=1$ then it is impossible to obtain the edge label 1 on a positive edge. Hence, we may assume that $f(u)=1$ and $f(z_2)=2$.	
			Now, the only way to then obtain the edge label 1 on a positive edge is to  set $f(w_i)=3$ for some $i$. Subsequently, the only way to obtain the edge label 2 on a positive edge is to set $f(w_j)=4$ for some $j$, and so on. Hence, \wlg, we may set $f(w_i)=i+2$ for $i=1,2,\dots,r$. It is now impossible to obtain the required edge label $r+1$ on a positive edge.
			
			Conversely, if $l=0$ then, define $f:V(S)\rightarrow\{0,1,\dots,r+2\}$ by $f(u,z_1,z_2) = (1,0,2)$ and $f(w_i)=2+i$, $i=1,2,\dots,r$. \Everify, $f(V(S))=\{0,1,\dots,r+2\}$ and $f$ is an additively graceful labeling of $S$.
		\end{proof}

		\begin{thm}
			\label{thm: double star with 2 neg pendants and r=odd r>=3 l=3 is not add graceful}
			Let $S$ be the $(p,m,2)$ signed graph consisting of a positive edge, to which are attached $3$ positive pendant edges and 2 negative pendant edges at one end as well as $r$ positive pendant edges at the other end.
			If $r$ is odd and $r\geq7$ then, $S$ is not additively graceful.
		\end{thm}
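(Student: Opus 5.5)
We argue by contradiction, in the same way as \Cref{thm: double star with 2 neg pendants and l=odd l>=3 r=3 is not add graceful} and \Cref{thm: double star with 2 neg pendants and l=1 r>4 is not grace elegant}. Let $z_1z_2$ be the positive edge, with positive edges $z_1v_1,z_1v_2,z_1v_3$ and $z_2w_i$, $i=1,\dots,r$, and negative edges $z_1u_1,z_1u_2$. Here $m=r+4$ and $n=2$. Suppose $f$ is an additively graceful labeling. By \Cref{thm: add grace tree requires n<=2}, $f$ is a bijection onto $\{0,1,\dots,r+6\}$. By \Cref{obs: two negative edges must be adjacent in add grace sigraph} we may take $f(z_1,u_1,u_2)=(0,1,2)$.

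Every positive edge at $z_1$ has label equal to the label of its other end. These labels are at most $m=r+4$, so $r+5$ and $r+6$ sit on $w$'s. Put $k=f(z_2)$; then $k=f^*(z_1z_2)\le r+4$.

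\textbf{Pairing bound.} For each $d\ge 1$, at most one of the labels $k-d,\,k+d$ can lie on a $w$, since otherwise the edge label $d$ repeats on two edges at $z_2$. Every $d$ with $3\le k-d$ and $k+d\le r+6$ therefore forces a $v$ onto one of $k\pm d$. There are only three $v$'s, so $\min(k-3,\,r+6-k)\le 3$. Together with $3\le k\le r+4$ this gives $k\in\{3,4,5,6\}$ or $k\in\{r+3,r+4\}$. Because $r\ge 7$, these two ranges are genuinely separated.

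\textbf{Case $k=r+4$.} The $w$'s at $r+5,r+6$ give edge labels $1,2$, and $z_1z_2$ gives $r+4$. The labels $3,\dots,r+3$ are split into a set $V$ of three $v$-labels and a set $W$ of $r-2$ $w$-labels. These must produce exactly the edge labels $3,\dots,r+3$, so
\[
V\sqcup\{r+4-x : x\in W\}=\{3,\dots,r+3\}.
\]
Hence $W$ is invariant under $x\mapsto r+4-x$. In particular $W\subseteq\{3,\dots,r+1\}$, since the partners of $r+2$ and $r+3$ are $2$ and $1$. The number $r+4$ is odd, so this involution has no fixed point, and $|W|$ must be even. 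But $|W|=r-2$ is odd, a contradiction.

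\textbf{Case $k=r+3$.} The $w$'s at $r+5,r+6$ give edge labels $2,3$. To get edge label $1$, some $w$ must carry $r+2$ or $r+4$. If $r+4$ were on a $w$, then edge label $r+4$ could come only from a $v$ labeled $r+4$, which is impossible. So $r+2$ is on a $w$ and $r+4$ is on a $v$. This leaves labels $3,\dots,r+1$ for two $v$'s and $r-3$ $w$'s, which must produce edge labels $4,\dots,r+2$. I then expect the same reflection argument, now about $r+3$ (even), to pin down the configuration. The reflection $x\mapsto r+3-x$ sends $\{3,\dots,r+1\}$ to $\{2,\dots,r\}$. Matching these shifted ranges against the required set $\{4,\dots,r+2\}$ should force a parity or counting contradiction. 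This is the step I expect to be the main obstacle, since the fixed point $(r+3)/2$ removes the clean parity argument and the endpoints must be tracked carefully.

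\textbf{Small cases $k\in\{3,4,5,6\}$.} Here I argue as in \Cref{thm: double star with 2 neg pendants and l=1 r>4 is not grace elegant}. Both $k+1,k+2$, and all labels up to $k+3$, lie in the range because $r+6\ge 13$. A $w$ at $k+d$ gives edge label $d$, so avoiding duplications forces specific labels onto the three $v$'s. Take $k=3$ as an example. The label $6$ cannot be on a $w$, since it would duplicate edge label $3$ with $z_1z_2$; so some $v$ carries $6$. Then a $w$ at $9$ would duplicate edge label $6$, so $9$ is also on a $v$. Continuing, a third $v$ is consumed, and the chain of forced labels exceeds three $v$'s because $r+6\ge 13$. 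The cases $k=4,5,6$ are handled the same way: one traces the forced chains $k\pm d$, $d\le 3$, together with the edge label $k$, until a fourth $v$ would be needed.
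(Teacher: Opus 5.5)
Your reduction differs from the paper's route, but the proof is not finished. The pairing bound $\min(k-3,\,r+6-k)\le 3$ is sound and removes every $7\le k\le r+2$ at once. The paper instead forces $r+4$, $r+3$, $r+2$ into $T=f(\{v_1,v_2,v_3\})$ one at a time until only $k=5$ remains. Your case $k=r+4$ is the paper's parity argument.

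\textbf{The case $k=r+3$ is left open.} You stop at an expectation, and the reflection about $r+3$ is the wrong tool. You already have what you need:
\begin{itemize}
\item You showed that $r+2$ lies on a $w$, so no $v$ carries $r+2$.
\item Every edge $z_2w$ has label $|x-(r+3)|\le r$, because $3\le x\le r+6$.
\item $f^*(z_1z_2)=r+3$.
\end{itemize}
So the positive edge label $r+2$ is never produced, which is a contradiction. In your own bookkeeping, the two remaining $v$'s give labels in $\{3,\dots,r+1\}$ and the remaining $w$'s give labels in $\{2,\dots,r\}$. Neither set contains the required label $r+2$. (The paper's step ``$f(z_2)\neq 3$ implies $r+3\in T$'' passes over this same case silently.)

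\textbf{The small cases are only sketched, and the worked example is miscounted.} For $r=7$ the labels stop at $13$. So for $k=3$ the chain is $6,9,12$, which is exactly three $v$'s, and no fourth is forced.

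The missing ingredient is the one the paper relies on. If $k\neq r+4$, the edge label $r+4$ cannot occur on $z_1z_2$. It also cannot occur on any edge $z_2w$, since there $|x-k|\le r+3$. Hence some $v$ must carry $r+4$, which is an odd label at least $11$. With this fact, each small case forces four distinct $v$-labels:
\begin{itemize}
\item $k=3$ forces $6,9,12$ and $r+4$.
\item $k=4$ forces $8$, $12$, one of $3,5$, and $r+4$.
\item $k=5$ forces $10$, one of $4,6$, one of $3,7$, and $r+4$.
\item $k=6$ forces one label from each of $\{5,7\}$, $\{4,8\}$, $\{3,9\}$, together with $12$.
\end{itemize}
Alternatively, for $r=7$ your own remark that $v$-labels are at most $m=11$ already rules out $12$. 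With these two repairs your argument is complete.
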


		\begin{proof}
			Let $z_1z_2$ be the positive edge, to which are attached positive edges $z_1v_i$, $i=1,2,3$ and $z_2w_i$, $i=1,2,\dots,r$, as well as negative edges $z_1u_1$ and $z_1u_2$. Suppose $f:V(S)\rightarrow\{0,1,\dots,r+6\}$ is an additively graceful labeling of $S$. By \Cref{obs: two negative edges must be adjacent in add grace sigraph} and \wlg, $f(z_1,u_1,u_2)=(0,1,2)$. Now $f$ needs to assign the labels $3,4,\dots,r+6$ to the vertices $v_1,v_2,v_3,z_2,w_1,w_2\dots, w_r$. Let $T=f(\{v_1,v_2,v_3\})$ and $W=f(\{w_1,w_2,\dots, w_r\})$. Since $m= r+4$ hence, $r+5,~r+6 \in W$.
			\\\\
			If $f(z_2)=r+4$ then $r+2,~r+3 \in T$ because this is the only way to obtain them as edge labels on positive edges. We now need to assign the $r-1$ labels $3,4,\dots,r+1$ to the remaining vertices in $W$ and $T$, so as to again obtain edge labels $3,4,\dots,r+1$, on the remaining positive edges. These $r-1$ labels can be arranged in pairs $(3,r+1)$, $(4,r)$, $\dots$, $(\frac{r+3}{2},\frac{r+5}{2})$ where the sum of each pair is $r+4$. Notice that if one number of the pair is in $W$ then the other must be in $W$ as well, else it is impossible to have the first appear as an edge label on a positive edge. Now since $r-2$ is odd hence, assigning labels to the remaining vertices  without violating this condition is not possible. Therefore $f(z_2)\neq r+4$, which in turn implies that $r+4 \in T$, since this is the only way to obtain $r+4$ as an edge label on a positive edge.
			\\\\
			Since $r\geq7$ hence $r+6\geq13$ and therefore, labels $0,1,\dots,13$ must be assigned to vertices.
			\\\\
			If $f(z_2)=3$ then, $6$ and subsequently $9$, followed by $12$ are forced to be in $T$, which is impossible. Therefore $f(z_2)\neq 3$, which in turn implies that $r+3 \in T$.
			\\\\
			If $f(z_2)=4$ then, $8$ and subsequently $12$ must be in $T$, which is impossible. Therefore $f(z_2)\neq 4$, which in turn implies that $r+2 \in T$.
			\\\\
			Hence, $f(z_2)=5$ because it is the only way to obtain $r+1$ as a label on a positive edge.
			It follows that $r-1$ cannot be obtained as a positive edge label. Hence $f$ cannot be an additively graceful labeling.		
		\end{proof}	
	\section{Conclusion} 
	At the outset of this investigation, it was somewhat surprising that an object as structurally simple as a double star could exhibit such intricate patterns. We have systematically studied and presented many existence, uniqueness and non-existence results on additively graceful signed stars and double stars in this paper.

	\bibliographystyle{plain}
	\bibliography{references}
	\end{document}